
\documentclass[letterpaper, 10 pt, conference]{ieeeconf}  

\IEEEoverridecommandlockouts                              

\overrideIEEEmargins                                      



\usepackage{siunitx} 
\usepackage{amsmath,amssymb,amsfonts,mathtools,tensor,bm} 
\usepackage{cite}
\usepackage{graphicx}
\usepackage{xcolor}
\usepackage{caption}
\usepackage{fancyhdr}
\usepackage{booktabs}
\usepackage{tabu}
\usepackage{multirow}
\usepackage{tabularx}

\usepackage{pgfplots}
\usepackage{varwidth}

\usepackage[ruled,vlined,onelanguage]{algorithm2e}
\SetKwProg{Task}{}{:}{}
\SetKwProg{Fn}{Function}{:}{}

\usepackage[noconfig,amsmath,thmmarks]{ntheorem}
\theoremnumbering{arabic}
\theoremstyle{plain}
\RequirePackage{latexsym}
\theorembodyfont{\itshape}
\theoremheaderfont{\normalfont\bfseries}
\theoremseparator{}

\newtheorem{lemma}{Lemma}

\newtheorem{remark}{Remark}
\newtheorem{problem}{Problem}

\newtheorem{assumption}{Assumption}

\newcommand{\cov}[2]{\mathrm{cov}\left( #1, #2\right)}
\newcommand{\covs}[1]{\mathrm{cov}\left( #1 \right)}
\newcommand{\E}[1]{\mathrm{E}\left\{#1\right\}}
\newcommand{\mat}[1]{\left[\begin{matrix} #1 \end{matrix}\right]}
\newcommand{\norm}[1]{\left\lVert#1\right\rVert}
\DeclareMathOperator{\vecv}{vec}
\DeclareMathOperator{\diag}{diag}
\DeclareMathOperator{\ISOC}{ISOC}

\usepackage{ifthen}
\newboolean{showcomments}
\setboolean{showcomments}{false} 
\ifthenelse{\boolean{showcomments}}{ 
	\newcommand\td[1]{\textcolor{red}{\textbf{TODO:} #1}} 
	\newcommand\tdd[1]{} 
	\newcommand\comment[1]{\textcolor{blue}{\textbf{Comment:} #1}} 
	\newcommand\commentd[1]{} 
	\newcommand\frage[1]{\textcolor{orange}{\textbf{Rückfrage:} #1}} 
	\newcommand\fraged[1]{}
}{ 
	\newcommand\td[1]{}
	\newcommand\tdd[1]{} 
	\newcommand\comment[1]{} 
	\newcommand\commentd[1]{} 
	\newcommand\frage[1]{}
	\newcommand\fraged[1]{}
}

\setlength{\voffset}{-30pt}
\setlength{\headsep}{30pt}

\title{\LARGE \bf
Inverse Stochastic Optimal Control for Linear-Quadratic Gaussian and Linear-Quadratic Sensorimotor Control Models
}

\author{Philipp Karg, Simon Stoll, Simon Rothfuß and Sören Hohmann*
\thanks{*All authors are with the Institute of Control Systems (IRS) at the Karlsruhe Institute of Technology (KIT), 76131 Karlsruhe, Germany. Corresponding author is Philipp Karg, {\tt\small philipp.karg@kit.edu}.}
}

\begin{document}

\maketitle
\thispagestyle{fancy}
\pagestyle{fancy}

\fancyhf{}
\fancyhead[CO,CE]{\copyright~2022 IEEE. This paper has been accepted for publication and presentation at the \\61st IEEE Conference on Decision and Control.}


\begin{abstract}

In this paper, we define and solve the Inverse Stochastic Optimal Control (ISOC) problem of the linear-quadratic Gaussian (LQG) and the linear-quadratic sensorimotor (LQS) control model. These Stochastic Optimal Control (SOC) models are state-of-the-art approaches describing human movements. The LQG ISOC problem consists of finding the unknown weighting matrices of the quadratic cost function and the covariance matrices of the additive Gaussian noise processes based on ground truth trajectories observed from the human in practice. The LQS ISOC problem aims at additionally finding the covariance matrices of the signal-dependent noise processes characteristic for the LQS model. We propose a solution to both ISOC problems which iteratively estimates cost function and covariance matrices via two bi-level optimizations. Simulation examples show the effectiveness of our developed algorithm. It finds parameters that yield trajectories matching mean and variance of the ground truth data.
\comment{bewusst ground truth statt measured genutzt - ground truth allgemeiner, womit in Simulation simulierte Trajektorien und in Praxis gemessene eingeschlossen sind - formal wird das dann in Remark~\ref{remark:DefinitionReference} definiert und auch bzgl. der Anwendung auf Messdaten glatt gezogen - ground truth im Vgl. zu reference bevorzugen wg. Möglichkeit, dass ground truth data aus reference tracking resultieren - ground truth = vom Menschen gemessene Daten, sprich keine virtuellen Größen, wie virtuelle Eingangsdaten oder Ausgangsdaten, vorhanden sowie i.Allg. nicht alle Systemzustände messbar}

\end{abstract}


\section{INTRODUCTION} \label{sec:intro}
Stochastic Optimal Control (SOC) models are state-of-the-art approaches describing human movements to a single goal \cite{Gallivan.2018}. They characterize the average behavior via the mean as well as the variability patterns via higher stochastic moments of the system state. The current main representative is the linear-quadratic (LQ) sensorimotor (LQS) control model \cite{Todorov.2002, Todorov.2004, Todorov.2005} which builds upon the well-known LQ Gaussian (LQG) control model but takes signal-dependent noise processes in human execution and perception into account. Compared to the LQG case, in the LQS model a control-dependent noise process is added to the system state equation and a state-dependent one to the output equation. In order to verify the model hypothesis and investigate the optimality principles underlying human movements an identification of the unknown parameters of the SOC models, namely relative weights of the cost function and covariance matrices of the noise processes, on the basis of ground truth data, i.e. human measurements, is needed. The identified SOC models can then build the starting point to design a supporting automation in human-machine collaboration settings. For example, by taking advantage of the predicted human variability the accuracy of executing a via-point-movement task can be improved. In case of the LQS model, the identification of the covariance matrices is not only necessary to describe human variability patterns, like in the LQG case, but also to model human average behavior (see our results of Section~\ref{sec:isoc_problems})\footnotemark. However, research regarding the inverse problem of LQ SOC models, i.e. identifying both unknown parameter types, weighting matrices of the cost function and covariance matrices of the noise processes, is scarce.

\footnotetext{If only additive Gaussian noise is present, its corresponding covariance matrices solely influence the average behavior in case of nonlinear system dynamics \cite{Berret.2021}.}

Among the approaches considering the identification problem of SOC models, \cite{Priess.2014} address only the LQG model and thus, the influence of the signal-dependent noise processes cannot be investigated. The same holds for \cite{Chen.2015} and in addition, the proposed method determines solely cost function parameters. In \cite{Li.2011}, the control-dependent noise process is taken into account. However, a fully and not partially observable model is considered. Typically, SOC models of human movements are partially observable since not all system states can be measured by the human and its measurements are subject to noise. Moreover, in \cite{Li.2011} only the cost function is determined. Although \cite{Schultheis.2021} propose an identification approach for the complete LQS control model, the covariance matrices are not estimated as well. Finally, \cite{Kolekar.2018} determine cost function and noise parameters of the LQS model but since the authors apply the model to describe human driving behavior, the inverse problem is not focused. Thus, a formal mathematical description of the problem is missing and only two cost and two noise parameters are estimated. Whereas this can be sufficient for the specific driving task, such few parameters fail to analyze the influence of all parameters available in a SOC model. Due to solely identifying the cost function and due to their missing consideration of the partially observable setting, the majority of known Inverse Optimal Control (IOC) methods are not applicable to the inverse problem of SOC models in their current form. They are used to identify the relative weights of different cost function candidates, penalizing e.g. jerk, torque or effort, for deterministic optimal control models of human motor planning. Bi-level optimization approaches can be found e.g. in \cite{Berret.2011,Albrecht.2012,Oguz.2018}, methods based on Karush-Kuhn-Tucker conditions in \cite{Lin.2016,Panchea.2018,Westermann.2020} and \cite{Jin.2019} utilize a method based on Hamilton optimality conditions. Such models assume a separation between motor planning and execution (see e.g. \cite{Flash.1985, Uno.1989} as seminal works). However, this traditional separation has been challenged in the last years \cite{Gallivan.2018} and SOC models arose.
\comment{IRL methods und auch Mainprice bewusst nicht ausführlich genannt} 

In summary, the inverse problem of LQ SOC models lacks formal definitions and no method exists that solves this Inverse Stochastic Optimal Control (ISOC) problem for the LQG and LQS case. Consequently, in this paper, we first define these ISOC problems and then, propose a new algorithm that solves them by iteratively estimating cost function and noise parameters via two bi-level optimizations. For their lower level we state new recursive calculations of the stochastic moments of the system state compared in the upper level with their ground truth values which leads to significantly reduced computation times. Moreover, our algorithm considers that in the ground truth data observed from the human not all system states are measured. Lastly, we provide simulation examples to show the effectiveness of our algorithm.
\tdd{
\begin{itemize}
	\item Abkürzungen LQG, ISOC einführen
\end{itemize}
}


\section{INVERSE STOCHASTIC OPTIMAL CONTROL PROBLEMS} \label{sec:isoc_problems}
In this section, formal definitions of the ISOC problem for the LQG (Subsection~\ref{subsec:isoc_problem_lqg}) and the LQS case (Subsection~\ref{subsec:isoc_problem_sensorimotor}) are given. Based on stating the respective SOC problems and the solutions to these forward optimal control problems, the inverse problems are defined.

\subsection{Linear-Quadratic Gaussian Case} \label{subsec:isoc_problem_lqg}
Let the discrete dynamics of a linear system be given by
\begin{align}
	\bm{x}_{t+1} &= \bm{A} \bm{x}_{t} + \bm{B} \bm{u}_{t} + \bm{\xi}_{t} \label{eq:LQGDynamics} \\
	\bm{y}_{t} &= \bm{H} \bm{x}_{t} + \bm{\omega}_{t} \label{eq:LQGFeedback},
\end{align}
where $\bm{x}\in\mathbb{R}^{n}$ denotes the system state (with additional time index $t$ the corresponding stochastic process $\bm{x}_t$ is described), $\bm{u}\in\mathbb{R}^{m}$ the control variables, $\bm{y}\in\mathbb{R}^{r}$ the observable output and $\bm{A}$, $\bm{B}$, $\bm{H}$ system matrices of appropriate dimensions that may depend on time. Furthermore, $\bm{\xi}_t \in \mathbb{R}^n$ and $\bm{\omega}_t \in \mathbb{R}^r$ are white Gaussian noise processes independent to each other and to $\bm{x}_t$. Each noise process $\bm{\xi}_t$ and $\bm{\omega}_t$ is composed by a standard white Gaussian noise process $\bm{\alpha}_t \in \mathbb{R}^p$ and $\bm{\beta}_t \in \mathbb{R}^q$ ($\cov{\bm{\alpha}_t}{\bm{\alpha}_t} = \covs{\bm{\alpha}_t} = \bm{I}$ and $\covs{\bm{\beta}_t}=\bm{I}$), respectively, where $\bm{\alpha}_t$ and $\bm{\beta}_t$ are independent to each other and to $\bm{x}_t$: $\bm{\xi}_t = \bm{\Sigma}^{\bm{\xi}} \bm{\alpha}_t$ and $\bm{\omega}_t = \bm{\Sigma}^{\bm{\omega}} \bm{\beta}_t$. Thus, the covariance matrices of $\bm{\xi}_t$ and $\bm{\omega}_t$ are defined by $\bm{\Omega}^{\bm{\xi}} = \covs{\bm{\xi}_t} = \bm{\Sigma}^{\bm{\xi}} \left(\bm{\Sigma}^{\bm{\xi}}\right)^{\mathrm{T}}$ and $\bm{\Omega}^{\bm{\omega}} = \covs{\bm{\omega}_t} = \bm{\Sigma}^{\bm{\omega}} \left(\bm{\Sigma}^{\bm{\omega}}\right)^{\mathrm{T}}$,
where $\bm{\Omega}^{\bm{\xi}}$ is assumed to be positive semi-definite and $\bm{\Omega}^{\bm{\omega}}$ positive definite. For the LQG case, we define the noise parameter vector $\bm{\sigma} \in \mathbb{R}^{\Sigma}$ as $\bm{\sigma} = \mat{\vecv(\bm{\Sigma}^{\bm{\xi}})^{\mathrm{T}} & \vecv(\bm{\Sigma}^{\bm{\omega}})^{\mathrm{T}}}^{\mathrm{T}}$. 
Finally, the initial values of $\bm{x}_t$ are given by $\E{\bm{x}_{0}}$ and $\bm{\Omega}^{\bm{x}}_{0} = \covs{\bm{x}_0}$ ($\bm{\Omega}^{\bm{x}}_{0}$ positive semi-definite). 
\comment{Bedingungen an die Kovarianzmatrizen können durch geeignete Wahl der $\sigma_i$ erreicht werden, durch bekannte Beziehung zu den Kovarianzmatrizen folgen aus $\bm{\sigma}$ die Kovarianzmatrizen $\bm{\Omega}^{\bm{\xi}}$ und $\bm{\Omega}^{\bm{\omega}}$, z.B. werden Bedingungen an Kovarianzmatrizen bei diagonalen $\bm{\Sigma}^{\bm{\xi}}$ und $\bm{\Sigma}^{\bm{\omega}}$ durch rein positive Werte $\sigma_i$ erfüllt - Bedingungen an Matrizen und/oder $\sigma_i$ können in GS des upper level berücksichtigt werden ($J_{GS} = - \infty$ bei nicht geeigneten grid points)}
\comment{$\bm{\sigma}$ wird hier stets über die vektorisierten Matrizen bestimmt, damit kann dann sowohl $\bm{\sigma}^{\ast}$ als auch $\tilde{\bm{\sigma}}$ einfach definiert werden; am Ende wird dann lediglich über die als bekannt angenommenen nicht in $\bm{\sigma}^*$ verschwindenden Elemente optimiert, um $\tilde{\bm{\sigma}}$ zu erreichen}

Considering the LQG optimal control problem, the aim is to find an admissible control strategy for \eqref{eq:LQGDynamics} and \eqref{eq:LQGFeedback} that minimizes a performance criterion \cite[p.~258]{Astrom.1970}, which is defined by
\begin{align}
	J = \E{\bm{x}^{\mathrm{T}}_N \bm{Q}_N \bm{x}_N + \sum_{t=0}^{N-1} \bm{x}^{\mathrm{T}}_t \bm{Q} \bm{x}_t + \bm{u}^{\mathrm{T}}_t \bm{R} \bm{u}_t}, \label{eq:LQGCost}
\end{align} 
where $\bm{Q}_N$, $\bm{Q}$ and $\bm{R}$ are symmetric matrices of appropriate dimensions with $\bm{Q}_N$, $\bm{Q}$ positive semi-definite and $\bm{R}$ positive definite. For these cost function matrices, we introduce the notations\footnote{This notation is comparable to describing the cost function as linear combination of basis functions, common in the IOC literature (see e.g. \cite{Molloy.2020}), since e.g. each $\bm{x}^{\mathrm{T}}\bm{q}_{Q,i}\bm{q}_{Q,i}^{\mathrm{T}}\bm{x}$ represents such a basis function.} $\bm{Q}_N = \sum_{i = 1}^{S_N} s_{N,i} \bm{q}_{N,i} \bm{q}_{N,i}^{\mathrm{T}}$, $\bm{Q} = \sum_{i = 1}^{S_Q} s_{Q,i} \bm{q}_{Q,i} \bm{q}_{Q,i}^{\mathrm{T}}$ and $\bm{R} = \sum_{i = 1}^{S_R} s_{R,i} \bm{q}_{R,i} \bm{q}_{R,i}^{\mathrm{T}}$,
where $\bm{q}_{N,i}, \bm{q}_{Q,i} \in \mathbb{R}^{n}$ and $\bm{q}_{R,i} \in \mathbb{R}^{m}$. 
For the later inverse problem definition, we define the cost function parameter vector $\bm{s} \in \mathbb{R}^{S}$ ($S = S_N + S_Q + S_R$) as $\bm{s}^{\mathrm{T}} = \mat{s_{N,1} & \!\!\dots\! & s_{N,S_N} & s_{Q,1} & \!\dots\! & s_{Q,S_Q} & s_{R,1} & \!\dots\! & s_{R,S_R}}$. 
\comment{auch hier kann durch die geeignete Wahl der $s_i$ stets ein wohldefiniertes LQG-Problem gewährleistet werden, sprich die Bedingungen an die Kostenfunktionsmatrizen erfüllt werden, z.B. bei Einheitsvektoren für die $\bm{q}_i$ (Diagonalmatrizen entstehen) und positiven Werten für die $s_i$ - Bedingungen an Matrizen und/oder $s_i$ in upper level optimization durch Prüfung der grid points möglich, analog Kovarianzmatrizen und $\sigma_i$}
\comment{um im späteren Simulationsbeispiel eine Bestrafung der Abweichungen von $\bm{x}_{\text{ref}}$ zu ermöglichen, wird später eine Erweiterung des Zustands eingeführt, die die nötigen Sollverläufe enthält - dabei werden die $\bm{q}_{N,i}$ gerade geeignet gewählt, um die Bestrafung der Abweichungen zu realisieren - die Idee entstammt der Realisierung des OC Tracking durch die Erweiterung des Systems um ein Exosystem (vgl. Notizen IOC Tracking) - jedoch muss im hier vorliegenden Fall auf partially observable model geachtet werden, sprich damit das Regelgesetz die Aufteilung in feedback und feedforward controller korrekt durchführt, müssen in $\hat{\bm{x}}_t$ immer die tatsächlichen Sollgrößen enthalten sein (Mensch kennt diese ja auch), durch geschlossene Lösung in den Lemmata kann gerade nicht "Überschreibung" in jedem Zeitschritt erfolgen - im Fall von $\bm{x}_{\text{ref}}$: konstante Referenz entsteht, Initialisierung der erweiterten Zustände des Systems und Schätzers mit $\bm{x}_{\text{ref}}$, keine Varianz in diesen Größen, zzgl. keine Verrauschung in diesen Größen, Annahme Messbarkeit und keine Verrauschung in den entsprechenden Ausgangsgrößen (in Sim ergibt sich damit exakt gewünschtes Verhalten, sprich keine Varianz in den erweiterten Zuständen und Mittelwert stets auf dem Referenzwert) - für ISOC entsteht damit formal immer Fall der ground truth data mit nicht vollständig messbaren Zuständen - im Fall von Tracking $\bm{x}_{t,\text{ref}}$ (Tracking-Fall aber für vorliegendes Paper out-of-the-scope, daher wird hier auf zeitvariante Gewichtungsvektoren verzichtet): muss auch über konstante erweiterte Zustandsgröße realisiert werden, da Vorsteuerungssignal a priori nicht berechnet werden kann, hier wird $\bm{x}_{t,\text{ref}}$ in zeitvariante (bekannte) Basisvektoren $\bm{q}_{Q,t,i}$ integriert, womit letzte Zustandsgröße des erweiterten Systems (die Referenz widerspiegelt) zu konstanter Größe ($= 1$) wird}

According to \cite[p.~274]{Astrom.1970}, the LQG problem is solved by the control law $\bm{u}_{t} = -\bm{L}_{t} \hat{\bm{x}}_{t}$ with
\begin{align}
 	\bm{L}_{t} &= \left(\bm{R} + \bm{B}^{\mathrm{T}} \bm{Z}_{t+1} \bm{B} \right)^{-1} \bm{B}^{\mathrm{T}} \bm{Z}_{t+1} \bm{A} \label{eq:LQGControlLawL}
\end{align}
and $\hat{\bm{x}}_t$ denoting the estimation of $\bm{x}_t$ calculated by $\hat{\bm{x}}_{t+1} = \bm{A} \hat{\bm{x}}_{t} + \bm{B} \bm{u}_{t} + \bm{K}_{t} \left( \bm{y}_{t} - \bm{H} \hat{\bm{x}}_{t} \right)$, where
\begin{align}
	\bm{K}_{t} &= \bm{A} \bm{P}_{t} \bm{H}^{\mathrm{T}} \left( \bm{H} \bm{P}_{t} \bm{H}^{\mathrm{T}} + \bm{\Omega}^{\bm{\omega}} \right)^{-1} \label{eq:LQGEstimatorK}.
\end{align}
Here, $\bm{Z}_t$ and $\bm{P}_t$ are computed via Riccati difference equations with boundary values $\bm{Z}_{N} = \bm{Q}_{N}$ and $\bm{P}_{0} = \bm{\Omega}_{0}^{\bm{x}}$ \cite[p.~229, p.~274]{Astrom.1970}.

As explained in Section~\ref{sec:intro}, given the control $\bm{L}_t$ and filter matrices $\bm{K}_t$, we need a recursive calculation of $\E{\bm{x}_t}$ and $\bm{\Omega}_t^{\bm{x}} = \covs{\bm{x}_t}$ for an efficient realization of the lower level of the bi-level optimizations in our ISOC algorithm later.
\comment{kein conditional mean, da Gesamterwartungswert und -varianz zu vgl. bei bi-level ISOC - wird gemessen}
\comment{bei LQG könnte auch für $\E{\bm{x}_t}$ (für $\bm{\Omega}_t^{\bm{x}}$ und $\bm{P}_t$ nicht, da problematisch wg. nochmaliger Erwartungswertbildung, auch $\bm{P}_t$ aus Rechnungen MA Simon nicht ohne Weiteres rekonstruierbar) direkt $\hat{\bm{x}}_t$ mit nochmaliger Erwartungswertbildung über $\bm{y}_t$ genutzt werden (wenn eine Initialisierung des KF mit den initial values of $\bm{x}_t$ durchgeführt wird (im nachfolgenden Lemma aber nach MA Simon, einfacher) - geht bei uns, da als bekannt angenommen - KF ist dann erwartungstreu), im Beweis müsste dann kurz auf Theorem~4.1 aus \cite{Astrom.1970} verwiesen werden - daraus folgt, dass Erwartungswert unter Bedingung der Messungen von $\bm{y}_t$ bis zum aktuellen Zeitpunkt (deshalb nochmalige Erwartungswertbildung über $\bm{y}_t$) und Kovarianzmatrix direkt aus den Schätzergleichungen folgen (beide Größen sind sufficient statistic for conditional distribution unter Bedingung der Messungen von $\bm{y}_t$ bis zum aktuellen Zeitpunkt), wobei der Schätzer optimal bzgl. der Minimierung des quadrierten Schätzfehlers ist - dieser Zusammenhang zur sufficient statistic der conditional distribution unter Bedingung der Messungen von $\bm{y}_t$ bis zum aktuellen Zeitpunkt gilt aber gerade nur bei LQG, bei sensorimotor case wird zwar auch Minimierung quadrierter Schätzfehler durchgeführt (über gemeinsame Kostenfunktion für controller und estimator) allerdings fehlt formale Aussage zu dieser conditional distribution und auch separation theorem, dort kann lediglich die Erwartungstreue des als linear angenommenen Filters gezeigt werden unter Bedingung von $\hat{\bm{x}}_t$ (womit theoretisch auch dort $\E{\bm{x}_t}$ aus den Schätzergleichungen über eine nochmalige Erwartungswertbildung über $\hat{\bm{x}}_t$ abgeleitet werden könnte)}
\begin{lemma} \label{lemma:LQGSolution} 
	Let the LQG control problem be defined by \eqref{eq:LQGDynamics}, \eqref{eq:LQGFeedback} and \eqref{eq:LQGCost}. Let the solution be given by the control $\bm{L}_t$~\eqref{eq:LQGControlLawL} and filter matrices $\bm{K}_t$~\eqref{eq:LQGEstimatorK}. Then, the mean $\E{\bm{x}_t}$ and covariance $\bm{\Omega}_t^{\bm{x}}$ of $\bm{x}_t$ are computed by
	\begin{align}
		\mat{\E{\bm{x}_{t+1}} \\ \E{\hat{\bm{x}}_{t+1}}} &= \bm{\mathcal{A}}_t \mat{\E{\bm{x}_{t}} \\ \E{\hat{\bm{x}}_{t}}} \label{eq:LQGSolutionEW}, \\
		\mat{\bm{\Omega}_{t+1}^{\bm{x}} \!\!& \bm{\Omega}_{t+1}^{\bm{x}\hat{\bm{x}}} \\ \bm{\Omega}_{t+1}^{\hat{\bm{x}}\bm{x}} \!\!& \bm{\Omega}_{t+1}^{\hat{\bm{x}}}} &= \bm{\mathcal{A}}_t \mat{\bm{\Omega}_{t}^{\bm{x}} \!\!\!\!& \bm{\Omega}_{t}^{\bm{x}\hat{\bm{x}}} \\ \bm{\Omega}_{t}^{\hat{\bm{x}}\bm{x}} \!\!\!\!& \bm{\Omega}_{t}^{\hat{\bm{x}}}} \bm{\mathcal{A}}^\mathrm{T}_{t} + \mat{\bm{\Omega}^{\bm{\xi}} \!\!\!\!& \bm{0} \\ \bm{0} \!\!\!\!& \bm{K}_{t} \bm{\Omega}^{\bm{\omega}} \bm{K}^{\mathrm{T}}_{t}} \label{eq:LQGSolutionCOV}
	\end{align}
	with $\bm{\Omega}_{t}^{\bm{x}\hat{\bm{x}}} = \cov{\bm{x}_t}{\hat{\bm{x}}_t}$, $\bm{\Omega}_{t}^{\hat{\bm{x}}\bm{x}} = \cov{\hat{\bm{x}}_t}{\bm{x}_t}$, $\bm{\Omega}_{t}^{\hat{\bm{x}}} = \covs{\hat{\bm{x}}_t}$,
	\begin{equation}
		\bm{\mathcal{A}}_{t} = \begin{bmatrix} \bm{A} & -\bm{B} \bm{L}_{t} \\ \bm{K}_{t} \bm{H} & \bm{A} - \bm{K}_{t} \bm{H} - \bm{B} \bm{L}_{t} \end{bmatrix} \label{eq:MathcalA}
	\end{equation}
	and initial values $\E{\hat{\bm{x}}_0}=\hat{\bm{x}}_0=\E{\bm{x}_0}$ and
	\begin{equation}
		\mat{\bm{\Omega}_{0}^{\bm{x}} \!\!\!\!& \bm{\Omega}_{0}^{\bm{x}\hat{\bm{x}}} \\ \bm{\Omega}_{0}^{\hat{\bm{x}}\bm{x}} \!\!\!\!& \bm{\Omega}_{0}^{\hat{\bm{x}}}} = \mat{\bm{\Omega}_{0}^{\bm{x}} & \bm{0} \\ \bm{0} & \bm{0}} \label{eq:InitialOmega}.
	\end{equation}
\end{lemma}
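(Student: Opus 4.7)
The plan is to derive a joint affine recursion for the stacked vector $(\bm{x}_t^{\mathrm{T}}, \hat{\bm{x}}_t^{\mathrm{T}})^{\mathrm{T}}$ and then exploit linearity of expectation and the standard propagation-of-covariance formula for linear systems driven by independent noise. Substituting $\bm{u}_t = -\bm{L}_t \hat{\bm{x}}_t$ into \eqref{eq:LQGDynamics} gives $\bm{x}_{t+1} = \bm{A}\bm{x}_t - \bm{B}\bm{L}_t\hat{\bm{x}}_t + \bm{\xi}_t$, and substituting the same control together with the output equation \eqref{eq:LQGFeedback} into the estimator recursion yields $\hat{\bm{x}}_{t+1} = \bm{K}_t\bm{H}\bm{x}_t + (\bm{A} - \bm{K}_t\bm{H} - \bm{B}\bm{L}_t)\hat{\bm{x}}_t + \bm{K}_t\bm{\omega}_t$. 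Stacking these two relations produces exactly
\begin{equation*}
\begin{bmatrix}\bm{x}_{t+1} \\ \hat{\bm{x}}_{t+1}\end{bmatrix} = \bm{\mathcal{A}}_t \begin{bmatrix}\bm{x}_t \\ \hat{\bm{x}}_t\end{bmatrix} + \begin{bmatrix}\bm{\xi}_t \\ \bm{K}_t \bm{\omega}_t\end{bmatrix}
\end{equation*}
with $\bm{\mathcal{A}}_t$ as in \eqref{eq:MathcalA}.

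Taking expectations on both sides of this joint recursion, the noise terms drop out since $\bm{\xi}_t$ and $\bm{\omega}_t$ are zero-mean white Gaussian processes. This immediately yields \eqref{eq:LQGSolutionEW}. For \eqref{eq:LQGSolutionCOV}, I would apply the standard formula $\covs{\bm{\mathcal{A}}_t \bm{z}_t + \bm{n}_t} = \bm{\mathcal{A}}_t \covs{\bm{z}_t} \bm{\mathcal{A}}_t^{\mathrm{T}} + \covs{\bm{n}_t}$, which is valid whenever $\bm{z}_t = (\bm{x}_t^{\mathrm{T}}, \hat{\bm{x}}_t^{\mathrm{T}})^{\mathrm{T}}$ is uncorrelated with the driving noise $\bm{n}_t = (\bm{\xi}_t^{\mathrm{T}}, (\bm{K}_t\bm{\omega}_t)^{\mathrm{T}})^{\mathrm{T}}$. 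The noise covariance has the block-diagonal form shown in \eqref{eq:LQGSolutionCOV} because $\bm{\xi}_t$ and $\bm{\omega}_t$ are assumed mutually independent.

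The main technical obstacle, and the step that should be argued carefully, is justifying that the driving noise at time $t$ is uncorrelated with $\hat{\bm{x}}_t$. This follows by induction on $t$: the state $\bm{x}_t$ is a linear combination of $\bm{x}_0$ and $\{\bm{\xi}_k\}_{k<t}$, while $\hat{\bm{x}}_t$ is a linear combination of $\hat{\bm{x}}_0$, $\{\bm{\xi}_k\}_{k<t}$ and $\{\bm{\omega}_k\}_{k<t}$ (through $\bm{y}_k$ and the past control inputs). Since $\bm{\xi}_t$ and $\bm{\omega}_t$ are white and independent of $\bm{x}_0$ as well as of all past noise, they are independent of both $\bm{x}_t$ and $\hat{\bm{x}}_t$, hence uncorrelated with the stacked state at time $t$.

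The initial conditions are handled separately: setting the filter to $\hat{\bm{x}}_0 = \E{\bm{x}_0}$ deterministically gives $\E{\hat{\bm{x}}_0} = \E{\bm{x}_0}$, and since $\hat{\bm{x}}_0$ is non-random its variance and its covariance with $\bm{x}_0$ both vanish, producing the block structure \eqref{eq:InitialOmega}. Combining these base cases with the one-step recursions derived above yields the claim for all $t$ by induction.
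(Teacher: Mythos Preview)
Your proof is correct and follows essentially the same approach as the paper: substitute the control law into the dynamics and the filter equation, stack to obtain the joint linear recursion driven by $(\bm{\xi}_t^{\mathrm{T}},(\bm{K}_t\bm{\omega}_t)^{\mathrm{T}})^{\mathrm{T}}$, and then read off the mean and covariance recursions using independence of the time-$t$ noise from $(\bm{x}_t,\hat{\bm{x}}_t)$. If anything, you are more explicit than the paper, which simply asserts the independence of $\bm{\xi}_t,\bm{\omega}_t$ from $\hat{\bm{x}}_t$; your inductive justification via the whiteness assumption fills in that step.
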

\begin{proof}
	With $\bm{u}_t = -\bm{L}_t \hat{\bm{x}}_t$ in \eqref{eq:LQGDynamics} and in the filter equation for $\hat{\bm{x}}_{t+1}$, \eqref{eq:LQGSolutionEW} results. Using the same expressions for $\bm{x}_{t+1}$ and $\hat{\bm{x}}_{t+1}$ in
	\begin{equation}
		\mat{\bm{\Omega}_{t+1}^{\bm{x}} \!\!& \bm{\Omega}_{t+1}^{\bm{x}\hat{\bm{x}}} \\ \bm{\Omega}_{t+1}^{\hat{\bm{x}}\bm{x}} \!\!& \bm{\Omega}_{t+1}^{\hat{\bm{x}}}} = \covs{\mat{\bm{x}_{t+1} \\ \hat{\bm{x}}_{t+1}}}
	\end{equation}
	\eqref{eq:LQGSolutionCOV} is obtained by considering the independence of the noise processes $\bm{\xi}_t$ and $\bm{\omega}_t$ between each other and to $\bm{x}_t$ as well as $\hat{\bm{x}}_t$. The initial values follow from the initial value $\bm{\Omega}_{0}^{\bm{x}}$ of $\bm{x}_t$ and the initialization of the filter equations with $\hat{\bm{x}}_0=\E{\bm{x}_0}$.
\end{proof}

Lemma~\ref{lemma:LQGSolution} yields that the average behavior $\E{\bm{x}_t}$ predicted by the LQG model is solely influenced by the cost function parameters $\bm{s}$, which can be seen by \eqref{eq:LQGSolutionEW} and \eqref{eq:LQGControlLawL}, since $\bm{L}_t$ is independent of the covariance matrices. This is a special property of the LQG model due to the separation theorem. In the next subsection, we show that in the LQS case the covariance matrices are necessary to compute the average behavior as well.

Now, we can define the ISOC problem for the LQG case.
\comment{Problem bewusst nur mit Erwartungswert- und Varianzmatching formulieren, darauf fittet am Ende auch ISOC algorithm (und aufgestellte Lemmata berechnen diese entsprechend) - theoretisch könnte Problem auch allgemeiner definiert werden, mit Blick auf mehrere/höhere stochastische Momente oder sogar Wahrscheinlichkeitsverteilung (sprich dann alle Momente) - aber hier out-of-the-scope, auch weil i.Allg. nur bei LQG Erwartungswert und Varianz hinreichend für Beschreibung der Wahrscheinlichkeitsverteilung sind (vgl. multiplikative/nichtlineare Zusammenhänge zwischen Rauschprozessen beim sensorimotor case), daher könnte auf das matching höherer Momente/der Wahrscheinlichkeitsverteilung erstmal nur im LQG-Fall geschlossen werden - im ersten Schritt reicht damit erstmal Mittelwert und Varianz aus und das sind auch die zentralen Größen zur Beschreibung des mittleren Verhaltens und der Variabilität menschlicher Bewegungen in den ersten Untersuchungen}

\begin{assumption} \label{assumption:LQGTrajectorySamples}
	Suppose a set $\{{\bm{M}\bm{x}^{\ast}_{t}}^{(k)}\}$ ($k \in \{1,\dots,K\}$) of time-discrete trajectories ${\bm{M}\bm{x}^{\ast}_{t}}^{(k)}$ is given, where ${\bm{x}^{\ast}_{t}}^{(k)}$ denotes a realization of the stochastic process $\bm{x}_t^{\ast}$ and $\bm{M} \in \mathbb{R}^{\bar{n} \times n}$ follows from the identity matrix $\bm{I}$ by deleting rows corresponding to states which are not measured in the ground truth data. The stochastic process $\bm{x}_t^{\ast}$ results from the estimation-control loop consisting of \eqref{eq:LQGDynamics} and \eqref{eq:LQGFeedback} with $\bm{L}_t^{\ast}$~\eqref{eq:LQGControlLawL} and $\bm{K}_t^{\ast}$~\eqref{eq:LQGEstimatorK} corresponding to unknown cost function $\bm{s}^{\ast}$ and noise parameters $\bm{\sigma}^{\ast}$. Furthermore, let $\hat{\bm{m}}_t \approx \E{\bm{M}\bm{x}_t^{\ast}}$ and $\hat{\bm{\Omega}}^{\bm{x}^{\ast}}_{t} \approx \bm{M}\bm{\Omega}^{\bm{x}^{\ast}}_{t}\bm{M}^{\mathrm{T}}$ be estimates of the mean and covariance of the measured states of $\bm{x}_t^{\ast}$ computed from the set of trajectories.
\end{assumption}
\begin{assumption} \label{assumption:LQGBasisVectorsNonZeroElements}
	It is known which elements of $\bm{\sigma}^{\ast}$ are non-zero, but not their exact numerical values. Furthermore, the basis vectors $\bm{q}_{N,i}$, $\bm{q}_{Q,i}$ and $\bm{q}_{R,i}$ corresponding to the non-zero coefficients in $\bm{s}^{\ast}$ are known. Moreover, the system matrices $\bm{A}$, $\bm{B}$ and $\bm{H}$ are known.
\end{assumption}

\begin{problem} \label{problem:ISOCLQG}
	Let Assumptions~\ref{assumption:LQGTrajectorySamples} and \ref{assumption:LQGBasisVectorsNonZeroElements} hold. Find parameters $\tilde{\bm{s}}$ and $\tilde{\bm{\sigma}}$ such that $\tilde{\bm{L}}_t$~\eqref{eq:LQGControlLawL} and $\tilde{\bm{K}}_t$~\eqref{eq:LQGEstimatorK} resulting from $\tilde{\bm{s}}$ and $\tilde{\bm{\sigma}}$ lead to $\tilde{\bm{x}}_t$ in the estimation-control loop (\eqref{eq:LQGDynamics} and \eqref{eq:LQGFeedback} with $\tilde{\bm{L}}_t$ and $\tilde{\bm{K}}_t$) with $\E{\bm{M}\tilde{\bm{x}}_t}=\E{\bm{M}\bm{x}^{\ast}_t}$ and $\bm{M}\bm{\Omega}^{\tilde{\bm{x}}}_{t}\bm{M}^{\mathrm{T}}=\bm{M}\bm{\Omega}^{\bm{x}^{\ast}}_{t}\bm{M}^{\mathrm{T}}$.
\end{problem}

\begin{remark} \label{remark:DefinitionReference}
	Assumption~\ref{assumption:LQGTrajectorySamples} defines the ground truth data for an ISOC algorithm solving Problem~\ref{problem:ISOCLQG}. In a practical setup, these ground truth data is observed from the human. In Assumption~\ref{assumption:LQGTrajectorySamples}, $\bm{M}$ takes into account that not all system states can be measured in general. In addition, in several models of human biomechanics (see e.g. Section~\ref{sec:sim_results}) the human control signal as well as the perceived output are only virtual quantities and thus not measured when a human is observed. Hence, we do not consider measurements of realizations of $\bm{u}^{\ast}_t$ or $\bm{y}^{\ast}_t$ and do not aim at finding parameters $\tilde{\bm{s}}$ and $\tilde{\bm{\sigma}}$ yielding matching mean and covariance of the control or output variables as well.
\end{remark}

\subsection{Linear-Quadratic Sensorimotor Case} \label{subsec:isoc_problem_sensorimotor}
The LQG model cannot fully account for the characteristic variability patterns of human movements \cite{Todorov.2002,Kolekar.2018}. Hereto, a control-dependent noise process $\sum_{i=1}^{c} \varepsilon_t^{(i)} \bm{C}_i \bm{u}_t$ considering that higher control magnitudes lead to higher motor noise and a state-dependent noise process $\sum_{i=1}^{d} \epsilon_t^{(i)} \bm{D}_i \bm{x}_t$ accounting for multiplicative noise in human perception need to be added to \eqref{eq:LQGDynamics} and \eqref{eq:LQGFeedback} \cite{Todorov.2005}:
\begin{align}
	\bm{x}_{t+1} &= \bm{A} \bm{x}_{t} + \bm{B} \bm{u}_{t} + \bm{\xi}_{t} + \sum_{i=1}^{c} \varepsilon_t^{(i)} \bm{C}_i \bm{u}_t \label{eq:SensoDynamics} \\
	\bm{y}_{t} &= \bm{H} \bm{x}_{t} + \bm{\omega}_{t}  + \sum_{i=1}^{d} \epsilon_t^{(i)} \bm{D}_i \bm{x}_t \label{eq:SensoFeedback},
\end{align}
where $\bm{x}$, $\bm{u}$, $\bm{y}$, $\bm{A}$, $\bm{B}$, $\bm{H}$, $\bm{\xi}_{t}$ and $\bm{\omega}_t$ are defined as in Subsection~\ref{subsec:isoc_problem_lqg}. 
The time-independent matrices $\bm{C}_i$ and $\bm{D}_i$ are composed by $\bm{C}_i = \sigma_{i}^{\bm{u}}\bm{B}\bm{F}_i$ and $\bm{D}_i = \sigma_{i}^{\bm{x}}\bm{H}\bm{G}_i$, respectively, where $\bm{C}_i$, $\bm{D}_i$, $\bm{F}_i$ and $\bm{G}_i$ have appropriate dimensions. Furthermore, $\bm{\varepsilon}_t = \mat{\varepsilon_t^{(1)} & \dots & \varepsilon_t^{(c)}}^{\mathrm{T}}$ and $\bm{\epsilon}_t = \mat{\epsilon_t^{(1)} & \dots & \epsilon_t^{(d)}}^{\mathrm{T}}$ are standard white Gaussian noise processes ($\covs{\bm{\varepsilon}_t} = \covs{\bm{\epsilon}_t} = \bm{I}$) independent to each other, $\bm{\xi}_t$, $\bm{\omega}_t$ and $\bm{x}_t$. For the LQS case, we define the noise parameter vector $\bm{\sigma} \in \mathbb{R}^{\Sigma}$ as $\bm{\sigma} = \mat{\vecv(\bm{\Sigma}^{\bm{\xi}})^{\mathrm{T}} & \vecv(\bm{\Sigma}^{\bm{\omega}})^{\mathrm{T}} & \sigma_{1}^{\bm{u}} & \dots & \sigma_{c}^{\bm{u}} & \sigma_{1}^{\bm{x}} & \dots & \sigma_{d}^{\bm{x}}}^{\mathrm{T}}$. 
\comment{durch hier vorgenommene Definition der Skalierungsmatrizen $\bm{C}_i$, $\bm{D}_i$ wird sowohl bei den additiven als auch den multiplikativen Rauschprozessen immer der Skalierungsfaktor einer skalaren standardnormalverteilten Zufallsgröße bestimmt (und in $\bm{\sigma}$ aufgenommen) - als bekannt (oder postuliert) angenommene Struktur, wie dann Wirkung auf $\bm{x}$ oder $\bm{y}$ erfolgt entspricht sozusagen der aus IOC/IRL bekannten Kenntnis der Basisfunktionen (oder der Kenntnis der Basisvektoren bei den hier genutzten Gütemaßen) - resultierende Kovarianzmatrizen der multiplikativen Rauschprozesse sind per Definition positiv semi-definit und symmetrisch, keine weitere Überprüfung diesbezüglich nötig}

According to \cite{Todorov.2005}, the separation theorem no longer holds and one can compute an approximate (suboptimal) solution to the forward problem (minimization of \eqref{eq:LQGCost} subject to \eqref{eq:SensoDynamics} and \eqref{eq:SensoFeedback}) consisting of the control law $\bm{u}_{t} = -\bm{L}_{t} \hat{\bm{x}}_{t}$ and estimator law $\hat{\bm{x}}_{t+1} = \bm{A}\hat{\bm{x}}_{t} + \bm{B}\bm{u}_t + \bm{K}_{t} \left( \bm{y}_{t} - \bm{H} \hat{\bm{x}}_{t} \right) + \bm{\eta}_t$ by iterating between
\begin{align}
	\bm{L}_{t} &= \Big(\bm{R} + \bm{B}^{\mathrm{T}} \bm{Z}_{t+1}^{\bm{x}} \bm{B} \nonumber\\
	&\hphantom{=} + \sum_{i} \bm{C}_i^{\mathrm{T}} \left( \bm{Z}_{t+1}^{\bm{x}} + \bm{Z}_{t+1}^{\bm{e}} \right) \bm{C}_i \Big)^{-1} \bm{B}^{\mathrm{T}} \bm{Z}_{t+1}^{\bm{x}} \bm{A} \label{eq:SensoControlLawL} 
\end{align}
and
\begin{align}
	\bm{K}_{t} &= \bm{A} \bm{P}_{t}^{\bm{e}} \bm{H}^{\mathrm{T}} \Big( \bm{H} \bm{P}_{t}^{\bm{e}} \bm{H}^{\mathrm{T}} + \bm{\Omega}^{\bm{\omega}} \nonumber \\
	&\hphantom{=} + \sum_{i} \bm{D}_i \left( \bm{P}_t^{\bm{e}} + \bm{P}_t^{\hat{\bm{x}}} + \bm{P}_t^{\hat{\bm{x}}\bm{e}} + \bm{P}_t^{\bm{e}\hat{\bm{x}}} \right) \bm{D}_i^{\mathrm{T}} \Big)^{-1} \label{eq:SensoEstimatorK}
\end{align}
starting with an initial estimate for $\bm{K}_t$. The recursive equations to compute $\bm{Z}_{t}^{\bm{x}}$, $\bm{Z}_{t}^{\bm{e}}$, $\bm{P}_{t}^{\bm{e}}$, $\bm{P}_{t}^{\hat{\bm{x}}}$, $\bm{P}_{t}^{\hat{\bm{x}}\bm{e}}$ and $\bm{P}_{t}^{\bm{e}\hat{\bm{x}}}$ can be found in \cite{Todorov.2005}. The white Gaussian noise process $\bm{\eta}_t \in \mathbb{R}^{n}$ has the symmetric and positive semi-definite covariance matrix $\bm{\Omega}^{\bm{\eta}}$ and is independent to all other stochastic processes. This noise process accounts for errors of the internal model used by the human for signal processing \cite{Kolekar.2018}. However, since its characteristics are unexplored until now, we omit $\bm{\eta}_t$ in our ISOC algorithm and do not include the parameters of $\bm{\Omega}^{\bm{\eta}}$ in $\bm{\sigma}$. This would be possible by following the procedure of $\bm{\xi}_t$ and $\bm{\omega}_t$ for $\bm{\eta}_t$. 
\comment{bei Lösung kann hier nur nicht-adaptiver Filter genutzt werden, da später mit Lemma~\ref{lemma:SensoSolution} gerade Berechnung von Erwartungswert und Kovarianz mit $\bm{K}_t$ und $\bm{L}_t$ durchgeführt wird (für gesamten Zeithorizont) - bei adaptivem Filter ändert sich aber Filtermatrix in Abhängigkeit vom konkret auftretenden $\bm{x}$-Wert in einem Durchlauf des estimation-control loop, daher könnte in dem Fall eine Bestimmung von Erwartungswert und Kovarianz wieder nur über Einzelsimulationen erfolgen}

Again, we state a recursive calculation of $\E{\bm{x}_t}$ and $\bm{\Omega}_t^{\bm{x}}$ for our ISOC algorithm later.
\begin{lemma} \label{lemma:SensoSolution} 
	Let the LQS control problem be defined by \eqref{eq:SensoDynamics}, \eqref{eq:SensoFeedback} and \eqref{eq:LQGCost}. Let the (approximate) solution be given by $\bm{L}_t$~\eqref{eq:SensoControlLawL} and $\bm{K}_t$~\eqref{eq:SensoEstimatorK}. Then, the mean $\E{\bm{x}_t}$ and covariance $\bm{\Omega}_t^{\bm{x}}$ of $\bm{x}_t$ are computed by 
	\begin{align}
		\mat{\E{\bm{x}_{t+1}} \\ \E{\hat{\bm{x}}_{t+1}}} &= \bm{\mathcal{A}}_t \mat{\E{\bm{x}_{t}} \\ \E{\hat{\bm{x}}_{t}}} \label{eq:SensoSolutionEW}, \\
		\mat{\bm{\Omega}_{t+1}^{\bm{x}} \!\!& \bm{\Omega}_{t+1}^{\bm{x}\hat{\bm{x}}} \\ \bm{\Omega}_{t+1}^{\hat{\bm{x}}\bm{x}} \!\!& \bm{\Omega}_{t+1}^{\hat{\bm{x}}}} &= \bm{\mathcal{A}}_t \mat{\bm{\Omega}_{t}^{\bm{x}} \!\!\!\!& \bm{\Omega}_{t}^{\bm{x}\hat{\bm{x}}} \\ \bm{\Omega}_{t}^{\hat{\bm{x}}\bm{x}} \!\!\!\!& \bm{\Omega}_{t}^{\hat{\bm{x}}}} \bm{\mathcal{A}}^\mathrm{T}_{t} \nonumber \\		
		&\hphantom{=} + \mat{\bm{\Omega}^{\bm{\xi}} \!\!\!\!& \bm{0} \\ \bm{0} \!\!\!\!& \bm{\Omega}^{\bm{\eta}} + \bm{K}_{t} \bm{\Omega}^{\bm{\omega}} \bm{K}^{\mathrm{T}}_{t}} + \mat{\bar{\bm{\Omega}}_t^{\hat{\bm{x}}} \!\!\!\!& \bm{0} \\ \bm{0} \!\!\!\!& \bar{\bm{\Omega}}_t^{\bm{x}}} \label{eq:SensoSolutionCOV}
	\end{align}
	with $\bar{\bm{\Omega}}_t^{\hat{\bm{x}}} = \sum_{i} \bm{C}_i \bm{L}_t \left( \bm{\Omega}_{t}^{\hat{\bm{x}}} + \E{\hat{\bm{x}}_t}\E{\hat{\bm{x}}_t}^{\mathrm{T}} \right) \bm{L}_t^{\mathrm{T}} \bm{C}_i^{\mathrm{T}}$, $\bar{\bm{\Omega}}_t^{\bm{x}} = \sum_{i} \bm{K}_t \bm{D}_i \left( \bm{\Omega}_{t}^{\bm{x}} + \E{\bm{x}_t}\E{\bm{x}_t}^{\mathrm{T}} \right) \bm{D}_i^{\mathrm{T}} \bm{K}_t^{\mathrm{T}}$, $\bm{\mathcal{A}}_t$~\eqref{eq:MathcalA} and initial values $\E{\hat{\bm{x}}_0}=\hat{\bm{x}}_0=\E{\bm{x}_0}$ and \eqref{eq:InitialOmega}.
\end{lemma}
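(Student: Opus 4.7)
The plan is to mirror the proof of Lemma~\ref{lemma:LQGSolution}, substituting $\bm{u}_t = -\bm{L}_t \hat{\bm{x}}_t$ into \eqref{eq:SensoDynamics} and into the estimator update for $\hat{\bm{x}}_{t+1}$, and then to account separately for the two extra noise sources that distinguish the LQS case from the LQG case. First I would derive \eqref{eq:SensoSolutionEW}: taking expectations on both resulting equations, the additive terms $\bm{\xi}_t$, $\bm{\omega}_t$ and $\bm{\eta}_t$ vanish as in the LQG proof, and so do the multiplicative contributions $\sum_i \varepsilon_t^{(i)}\bm{C}_i\bm{u}_t$ and $\sum_i \epsilon_t^{(i)}\bm{D}_i\bm{x}_t$, because each $\varepsilon_t^{(i)}$ and $\epsilon_t^{(i)}$ is zero-mean and independent of $\hat{\bm{x}}_t$ and $\bm{x}_t$, respectively. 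The resulting block system is exactly $\bm{\mathcal{A}}_t [\E{\bm{x}_t}^{\mathrm{T}}\;\E{\hat{\bm{x}}_t}^{\mathrm{T}}]^{\mathrm{T}}$ with $\bm{\mathcal{A}}_t$ as in \eqref{eq:MathcalA}.

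For the covariance equation \eqref{eq:SensoSolutionCOV} I would decompose $[\bm{x}_{t+1}^{\mathrm{T}}\;\hat{\bm{x}}_{t+1}^{\mathrm{T}}]^{\mathrm{T}}$ into four mutually independent pieces: (i) the deterministic affine part $\bm{\mathcal{A}}_t[\bm{x}_t^{\mathrm{T}}\;\hat{\bm{x}}_t^{\mathrm{T}}]^{\mathrm{T}}$, (ii) the additive Gaussian part $[\bm{\xi}_t^{\mathrm{T}}\;(\bm{K}_t\bm{\omega}_t+\bm{\eta}_t)^{\mathrm{T}}]^{\mathrm{T}}$, (iii) the control-dependent multiplicative part $[-\sum_i\varepsilon_t^{(i)}\bm{C}_i\bm{L}_t\hat{\bm{x}}_t;\,\bm{0}]$, and (iv) the state-dependent multiplicative part $[\bm{0};\,\bm{K}_t\sum_i\epsilon_t^{(i)}\bm{D}_i\bm{x}_t]$. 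Since $\bm{\varepsilon}_t$, $\bm{\epsilon}_t$, $\bm{\xi}_t$, $\bm{\omega}_t$, $\bm{\eta}_t$ are jointly independent and each is independent of $(\bm{x}_t,\hat{\bm{x}}_t)$, the covariance splits as the sum of the covariances of these four terms. Piece (i) yields the $\bm{\mathcal{A}}_t(\cdot)\bm{\mathcal{A}}_t^{\mathrm{T}}$ block; piece (ii) yields the block-diagonal matrix with $\bm{\Omega}^{\bm{\xi}}$ and $\bm{\Omega}^{\bm{\eta}}+\bm{K}_t\bm{\Omega}^{\bm{\omega}}\bm{K}_t^{\mathrm{T}}$.

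Pieces (iii) and (iv) produce the new block-diagonal $\diag(\bar{\bm{\Omega}}_t^{\hat{\bm{x}}},\bar{\bm{\Omega}}_t^{\bm{x}})$. Computing the $(1,1)$-block for piece (iii): since the $\varepsilon_t^{(i)}$ are i.i.d.\ standard normal and independent of $\hat{\bm{x}}_t$, cross terms between different $i$ vanish and
\begin{equation*}
\covs{\textstyle\sum_i\varepsilon_t^{(i)}\bm{C}_i\bm{L}_t\hat{\bm{x}}_t}=\sum_i\bm{C}_i\bm{L}_t\,\E{\hat{\bm{x}}_t\hat{\bm{x}}_t^{\mathrm{T}}}\,\bm{L}_t^{\mathrm{T}}\bm{C}_i^{\mathrm{T}},
\end{equation*}
and the identity $\E{\hat{\bm{x}}_t\hat{\bm{x}}_t^{\mathrm{T}}}=\bm{\Omega}_t^{\hat{\bm{x}}}+\E{\hat{\bm{x}}_t}\E{\hat{\bm{x}}_t}^{\mathrm{T}}$ recovers exactly $\bar{\bm{\Omega}}_t^{\hat{\bm{x}}}$. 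The analogous calculation for piece (iv) gives $\bar{\bm{\Omega}}_t^{\bm{x}}$. The off-diagonal blocks vanish because pieces (iii) and (iv) are supported on disjoint block components and because $\bm{\varepsilon}_t \perp \bm{\epsilon}_t$. The initial values follow as in Lemma~\ref{lemma:LQGSolution}.

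The main obstacle is bookkeeping: one has to be careful that (a) the multiplicative-noise cross-moments between, say, $\varepsilon_t^{(i)}\bm{C}_i\bm{L}_t\hat{\bm{x}}_t$ and $\bm{\mathcal{A}}_t[\bm{x}_t;\hat{\bm{x}}_t]$ truly vanish, which requires independence of $\varepsilon_t^{(i)}$ from $(\bm{x}_t,\hat{\bm{x}}_t)$ together with $\E{\varepsilon_t^{(i)}}=0$; and (b) the second-moment $\E{\hat{\bm{x}}_t\hat{\bm{x}}_t^{\mathrm{T}}}$ (not the centered covariance) must appear in $\bar{\bm{\Omega}}_t^{\hat{\bm{x}}}$, which is the reason the $\E{\hat{\bm{x}}_t}\E{\hat{\bm{x}}_t}^{\mathrm{T}}$ correction term is present; the same pair of subtleties applies to piece (iv) with $\bm{x}_t$ in place of $\hat{\bm{x}}_t$.
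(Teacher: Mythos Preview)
Your proof is correct and follows essentially the same route as the paper: substitute the control and estimator laws, take expectations to get \eqref{eq:SensoSolutionEW}, and split the covariance of the stacked vector into the $\bm{\mathcal{A}}_t(\cdot)\bm{\mathcal{A}}_t^{\mathrm{T}}$ part, the additive-noise part, and the multiplicative-noise part, using $\E{\varepsilon_t^{(i)}\varepsilon_t^{(j)}}=\delta_{ij}$ and the second-moment identity $\E{\hat{\bm{x}}_t\hat{\bm{x}}_t^{\mathrm{T}}}=\bm{\Omega}_t^{\hat{\bm{x}}}+\E{\hat{\bm{x}}_t}\E{\hat{\bm{x}}_t}^{\mathrm{T}}$. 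One wording caveat: the four pieces you list are not ``mutually independent'' (pieces (i), (iii), (iv) all involve $(\bm{x}_t,\hat{\bm{x}}_t)$); what you actually need---and what your bookkeeping remark (a) correctly supplies---is that their pairwise cross-covariances vanish because the zero-mean noises $\bm{\varepsilon}_t,\bm{\epsilon}_t$ are independent of $(\bm{x}_t,\hat{\bm{x}}_t)$.
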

\begin{proof}
	See Appendix.
\end{proof}

Lemma~\ref{lemma:SensoSolution} yields that the average behavior $\E{\bm{x}_t}$ predicted by the LQS model depends on the covariance matrices and thus noise parameters $\bm{\sigma}$ since the calculation of $\bm{L}_t$~\eqref{eq:SensoControlLawL} depends on the $\bm{C}_i$ and $\bm{K}_t$ which in turn depend on $\bm{\sigma}$. Hence, the identification of the noise parameters are not only necessary for describing the variability patterns of human movements but also its average behavior.

Now, the ISOC problem of the LQS case can be defined.
\begin{assumption} \label{assumption:SensoTrajectorySamples}
	Assumption~\ref{assumption:LQGTrajectorySamples} holds with $\bm{x}^{\ast}_t$ being the stochastic process of the system state in the estimation-control loop consisting of \eqref{eq:SensoDynamics} and \eqref{eq:SensoFeedback} with $\bm{L}^{\ast}_t$~\eqref{eq:SensoControlLawL} and $\bm{K}^{\ast}_t$~\eqref{eq:SensoEstimatorK} resulting from unknown cost function $\bm{s}^{\ast}$ and noise parameters $\bm{\sigma}^{\ast}$.
\end{assumption}
\begin{assumption} \label{assumption:SensoBasisVectorsNonZeroElements}
	Assumption~\ref{assumption:LQGBasisVectorsNonZeroElements} holds. Furthermore, the $c$ scaling matrices $\bm{F}_i$ and the $d$ scaling matrices $\bm{G}_i$ are known.
\end{assumption}

\begin{problem} \label{problem:ISOCSenso}
	Let Assumptions~\ref{assumption:SensoTrajectorySamples} and \ref{assumption:SensoBasisVectorsNonZeroElements} hold. Find parameters $\tilde{\bm{s}}$ and $\tilde{\bm{\sigma}}$ such that $\tilde{\bm{L}}_t$~\eqref{eq:SensoControlLawL} and $\tilde{\bm{K}}_t$~\eqref{eq:SensoEstimatorK} resulting from $\tilde{\bm{s}}$ and $\tilde{\bm{\sigma}}$ lead to $\tilde{\bm{x}}_t$ in the estimation-control loop (\eqref{eq:SensoDynamics} and \eqref{eq:SensoFeedback} with $\tilde{\bm{L}}_t$ and $\tilde{\bm{K}}_t$) with $\E{\bm{M}\tilde{\bm{x}}_t}=\E{\bm{M}\bm{x}^{\ast}_t}$ and $\bm{M}\bm{\Omega}^{\tilde{\bm{x}}}_{t}\bm{M}^{\mathrm{T}}=\bm{M}\bm{\Omega}^{\bm{x}^{\ast}}_{t}\bm{M}^{\mathrm{T}}$.
\end{problem}



\section{SOLVING ISOC PROBLEMS: A BI-LEVEL-BASED APPROACH} \label{sec:solving_isoc}
In order to solve both ISOC problems stated in the previous section, we propose a new algorithm in the following. Before its detailed definition is given in Subsection~\ref{subsec:gridsearch_isoc}, we describe the general approach in Subsection~\ref{subsec:general_isoc_approach}.

\subsection{General Approach} \label{subsec:general_isoc_approach}
\begin{figure}[t]
	\centering
	\includegraphics[width=3.3in]{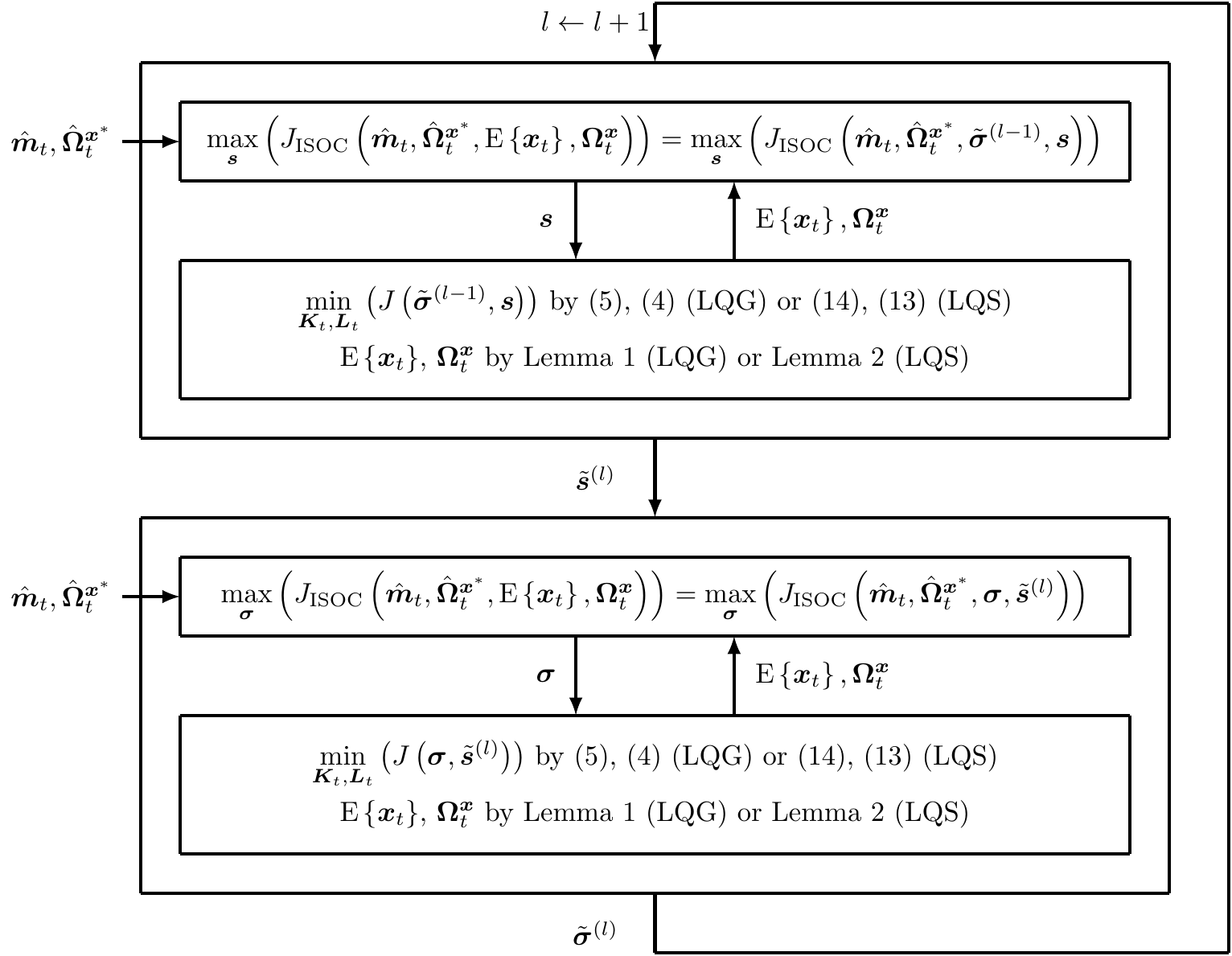}
	\caption{Overview of our ISOC algorithm.}
	\label{fig:general_algorithm}
\end{figure}
In the following, we explain the general procedure of our ISOC algorithm shown in Fig.~\ref{fig:general_algorithm}. Starting with a set of ground truth trajectories according to Assumptions~\ref{assumption:LQGTrajectorySamples} and \ref{assumption:SensoTrajectorySamples}, we state a direct optimization problem for finding parameters $\tilde{\bm{s}}$ and $\tilde{\bm{\sigma}}$. Hereto, we introduce a performance criterion $J_{\ISOC}$ describing how well the mean $\E{\bm{x}_t}$ and covariance value $\bm{\Omega}^{\bm{x}}_t$ of a current guess of $\bm{s}$ and $\bm{\sigma}$ match $\hat{\bm{m}}_t$ and $\hat{\bm{\Omega}}^{\bm{x}^{\ast}}_t$. In our case, we propose $J_{\ISOC}$ as follows:
\begin{equation}
	J_{\ISOC} = \frac{\bm{w}_{\cdot, \text{m}}^{\mathrm{T}} \bm{m}^{\text{VAF}} + \bm{w}_{\cdot, \text{v}}^{\mathrm{T}} \vecv\left(\bm{\Omega}^{\text{VAF}} \right)}{\norm{\bm{w}_{\cdot, \text{m}}}_1 + \norm{\bm{w}_{\cdot, \text{v}}}_1} \label{eq:J_GS},
\end{equation}
where $\bm{w}_{\cdot, \text{m}} \in \mathbb{R}^{\bar{n}}$ and $\bm{w}_{\cdot, \text{v}} \in \mathbb{R}^{\bar{n}\bar{n}}$ denote weighting vectors of the variance accounted for (VAF) metric of mean and covariance, respectively. Hence, each element $m^{\text{VAF}}_i$ of $\bm{m}^{\text{VAF}} \in \mathbb{R}^{\bar{n}}$ is computed by
\begin{equation}
	m^{\text{VAF}}_i = \left(1 - \frac{\sum_{t=0}^{N} \left(\left(\E{\bm{M}\bm{x}_t}\right)_i - \hat{m}_{i,t}\right)^2}{\sum_{t=0}^{N} \left(\hat{m}_{i,t} - \frac{1}{N+1}\sum_{t}\hat{m}_{i,t}\right)^2}\right) \label{eq:VAFEW}
\end{equation}
and each element $\Omega_{ij}^{\text{VAF}}$ of $\bm{\Omega}^{\text{VAF}} \in \mathbb{R}^{\bar{n} \times \bar{n}}$ by
\begin{equation}
	\Omega^{\text{VAF}}_{ij} = \left(1 - \frac{\sum_{t=0}^{N} \left(\left(\bm{M}\bm{\Omega}^{\bm{x}}_{t}\bm{M}^{\mathrm{T}}\right)_{ij} - \hat{\Omega}_{ij,t}^{\bm{x}^{\ast}}\right)^2}{\sum_{t=0}^{N} \left(\hat{\Omega}_{ij,t}^{\bm{x}^{\ast}} - \frac{1}{N+1}\sum_{t}\hat{\Omega}_{ij,t}^{\bm{x}^{\ast}}\right)^2}\right) \label{eq:VAFCOV}.
\end{equation}
Due to the chosen VAF metric, $m^{\text{VAF}}_i \in (-\infty,1]$ and $\Omega^{\text{VAF}}_{ij} \in (-\infty,1]$ hold where a value of $1$ corresponds to a perfect fit between the mean or covariance resulting from the current guess of $\bm{s}$ and $\bm{\sigma}$ and the mean or covariance of the ground truth data. In $J_{\ISOC}$~\eqref{eq:J_GS}, the weighting vectors $\bm{w}_{\cdot, \text{m}} \in \mathbb{R}^{\bar{n}}$ and $\bm{w}_{\cdot, \text{v}} \in \mathbb{R}^{\bar{n}\bar{n}}$ enable a varying weighting between mean and covariance VAF values as well as between these values of different states. The denominator in \eqref{eq:J_GS} ensures that $J_{\ISOC} \in (-\infty,1]$ still holds. Hence, the direct optimization aims at maximizing $J_{\ISOC}$: $\max \limits_{\bm{s},\bm{\sigma}} \left( J_{\ISOC} \left( \hat{\bm{m}}_t, \hat{\bm{\Omega}}^{\bm{x}^{\ast}}_t, \E{\bm{x}_t}, \bm{\Omega}^{\bm{x}}_t \right) \right) = \max \limits_{\bm{s},\bm{\sigma}} \left( J_{\ISOC} \left( \hat{\bm{m}}_t, \hat{\bm{\Omega}}^{\bm{x}^{\ast}}_t, \bm{\sigma}, \bm{s} \right) \right)$. In order to evaluate $J_{\ISOC}$, $\E{\bm{x}_t}$ and $\bm{\Omega}_t$ need to be calculated for a current guess of $\bm{s}$ and $\bm{\sigma}$ which requires the computation of the solution to the forward optimal control problem. Hence, for the optimization problem a typical bi-level-based structure results which are common in deterministic IOC as well (see e.g. \cite{Mombaur.2010}). In order to facilitate the computability of the bi-level-based optimization problem, we introduce an alternating descent approach motivated by the natural division of the optimization variables into cost function $\bm{s}$ and noise parameters $\bm{\sigma}$\cite{Bezdek.2002}. Computing the best estimate~$\tilde{\bm{s}}^{(l)}$ for a given estimate~$\tilde{\bm{\sigma}}^{(l-1)}$ and the best estimate~$\tilde{\bm{\sigma}}^{(l)}$ for $\tilde{\bm{s}}^{(l)}$ in each iteration~$l$ yields the iterative scheme shown in Fig.~\ref{fig:general_algorithm}. In each of the two steps in one iteration~$l$ a bi-level optimization problem results. The upper level is solved via a derivative-free optimization method to circumvent computationally expensive and maybe unreliable approximations of derivatives. For our algorithm proposed in this paper, we introduce a specially designed grid search in the next subsection. Due to the existence of a huge amount of local minima in such direct IOC optimization problems, a global optimization method is chosen. For the lower level, the results of Section~\ref{sec:isoc_problems} are used. With a given $\bm{s}$ and $\bm{\sigma}$ that need to be evaluated in the upper level, $\bm{K}_t$ and $\bm{L}_t$ are computed via \eqref{eq:LQGEstimatorK} and \eqref{eq:LQGControlLawL} (LQG model) or \eqref{eq:SensoEstimatorK} and \eqref{eq:SensoControlLawL} (LQS model). Then, our newly introduced Lemma~\ref{lemma:LQGSolution} (LQG model) and Lemma~\ref{lemma:SensoSolution} (LQS model) lead to $\E{\bm{x}_t}$ and $\bm{\Omega}^{\bm{x}}_t$.

Remarkably, our algorithm omits a preparation step, where filter or control matrices are identified on the basis of the ground truth trajectories first, e.g. as necessary for the inverse algorithm for the LQG control model in \cite{Priess.2014}.
\comment{zukünftige Anpassungen am Algorithmus (kleinere Verbesserungen aktuell bereits umgesetzt, u.a. Löschen der Max-Bildung): Test Ersatz der Grid Search für upper level optimization durch andere (global) derivative-free optimization (z.B. classical pattern search, state-of-the-art/open-source approach oder auch matlab solver) - ggf. kann Vorteil der Parallelisierung sowie Aufteilung in Parametermengen beibehalten werden und gleichzeitig Konvergenz besser/schneller sein und auch Aussagen dazu getätigt werden (aktuell kann es zu einem Springen zwischen fast gleich guten grid points kommen - Konvergenz wird durch outer loop geforced oder durch unterschiedliche delta-Werte); Test andere (suboptimale) Lösungsverfahren im lower level für LQS model; Test derivative-based optimization methods mit dann approximierten derivatives (ggf. in globaler Variante)}
\comment{future work: vgl. comment before, Konvergenzbeweis two-step procedure (Ansatzpunkt: Konvergenz eines alternating descent und daraus ggf. folgende Bedingungen an Gütemaß), Konvergenzanalyse Gesamtalgorithmus, Anwendung Algorithmus auf Daten Dieter (und damit auch Tracking-Probleme - nächstes Paper), Journal(s) dann bei komplettierten Verbesserungen, inkl. Aussagen zur Gesamtkonvergenz und Ergebnissen zu klinischen menschlichen Bewegungen, PIIRL-based ISOC algorithm}
\tdd{
\begin{itemize}
	\item direct approach
	\item Strukturbild, das two-step procedure zeigt und in jedem step bi-level procedure ähnlich zu Mombaur - anmerken, dass später grid search genutzt wird in each step for upper level optimization (other derivative-free optimizations can be used)
	\item Erläuterung Anpassungen, um eine hinreichend große Anzahl an Parametern identifizierbar zu machen (neben two-step procedure, spezifisches Design grid-search based algorithm, parameter spaces, and lower level is realized by lemmata from previous section)
\end{itemize}
}

\subsection{Grid-Search-based Bi-Level Inverse Stochastic Optimal Control Algorithm} \label{subsec:gridsearch_isoc}
\begin{algorithm}[t] 
	\caption{Bi-Level-based Inverse Stochastic Optimal Control.}
	\label{algo:gridSearchAlgorithm}
	\DontPrintSemicolon
	\SetInd{0.25em}{0.4em}
	\KwIn{$\hat{\bm{m}}_t$, $\hat{\bm{\Omega}}_t^{\bm{x}^{\ast}}$, $[a_{\sigma_i},\, b_{\sigma_i}], \forall i \in \{1,\dots,\Sigma\}$, $[a_{s_i},\, b_{s_i}], \forall i \in \{1,\dots,S\}$, $\bar{\gamma}_{l}$, $l_{\max}$}
	\KwOut{$\tilde{\bm{\sigma}}$, $\tilde{\bm{s}}$}
	
	Set $l = 1$\;
	Set $\tilde{\sigma}_i^{(0)} = 0 ,  \forall i \in \{1,\dots,\Sigma\}$ (yields $\tilde{\bm{\sigma}}^{(0)}$)\;
	Set $\tilde{s}_i^{(0)} = \frac{a_{s_i} + b_{s_i}}{2}, \forall i \in \{1,\dots,S\}$ (yields $\tilde{\bm{s}}^{(0)}$)\;
	\Repeat{$l > l_{\max}$}{
		\Task{Determine $\tilde{\bm{s}}^{(l)}$ subject to $\tilde{\bm{\sigma}}^{(l-1)}$}
		{
			Apply Algorithm~\ref{algo:gridSearchAlgorithmSubAlg} with $\bm{\theta}^{(0)}=\tilde{\bm{s}}^{(l-1)}$ and $\bm{\lambda}=\tilde{\bm{\sigma}}^{(l-1)}$ to get $\tilde{\bm{s}}^{(l)}=\bm{\theta}^{\ast}$
		}
		\Task{Determine $\tilde{\bm{\sigma}}^{(l)}$ subject to $\tilde{\bm{s}}^{(l)}$}
		{
			Apply Algorithm~\ref{algo:gridSearchAlgorithmSubAlg} with $\bm{\theta}^{(0)}=\tilde{\bm{\sigma}}^{(l-1)}$ and $\bm{\lambda}=\tilde{\bm{s}}^{(l)}$ to get $\tilde{\bm{\sigma}}^{(l)}=\bm{\theta}^{\ast}$
		}
		$b_{s_i} = \frac{1}{\bar{\gamma}_l}\left(b_{s_i} + \bar{\gamma}_l a_{s_i} - a_{s_i}\right), \forall i \in \{1,\dots,S\}$\;
		$b_{\sigma_i} = \frac{1}{\bar{\gamma}_l}\left(b_{\sigma_i} +\bar{\gamma}_l  a_{\sigma_i} - a_{\sigma_i}\right), \forall i \in \{1,\dots,\Sigma\}$\;
		$l \leftarrow l+1$\;	
	}
	\Return{$\tilde{\bm{\sigma}} = \tilde{\bm{\sigma}}^{(l_{\max})}$, $\tilde{\bm{s}} = \tilde{\bm{s}}^{(l_{\max})}$}
\end{algorithm}
\begin{algorithm}[t!]
	\caption{Grid-Search-based Bi-Level Optimization.}
	\label{algo:gridSearchAlgorithmSubAlg}
	\DontPrintSemicolon
	\SetInd{0.25em}{0.4em}
	\KwIn{$\hat{\bm{m}}_t$, $\hat{\bm{\Omega}}_t^{\bm{x}^{\ast}}$, $\bm{\theta}^{(0)}$, $\bm{\lambda}$, $[a_{\theta_i},\, b_{\theta_i}], \forall i \in \{1,\dots,\Theta\}$, $\mathcal{N}_{\bm{\theta}}$, $\bm{w}_{\bm{\theta}, \text{m}}$, $\bm{w}_{\bm{\theta}, \text{v}}$, $\mathcal{P}_{\bm{\theta}}$, $\bar{\gamma}_{\theta}$, $\delta_{\gamma,\bm{\theta}}$, $\delta_{\bm{\theta}}$, $v_{\bm{\theta},\max}$}
	\KwOut{$\bm{\theta}^{\ast}$}
		Set $v = 1$, $\gamma = 2$\;
		Set $j_{\max} = 1$, $J_{\ISOC}^{(j_{\max}),(0)} = J_{\ISOC}^{(j_{\max}),(-1)} = -\infty$\;
		\Repeat{$v > v_{\bm{\theta},\max}$ $\lor$ $\Big(\lvert J_{\ISOC}^{(j_{\max}),(v-1)} - J_{\ISOC}^{(j_{\max}),(v-2)} \rvert < \delta_{\bm{\theta}} \land \lvert J_{\ISOC}^{(j_{\max}),(v-1)} - J_{\ISOC}^{(j_{\max}),(v-3)} \rvert < \delta_{\bm{\theta}}\Big)$}{
			\ForEach{$p \in \mathcal{P}_{\bm{\theta}}$}{
				\Task{Initialize search space}
				{
					$a_{\theta_i}^{(v)} = \max\big(0, \theta_i^{(v-1)} - \frac{b_{\theta_i} - a_{\theta_i}}{\gamma}\big), \forall \theta_i \in p$\;
					$b_{\theta_i}^{(v)} = \theta_i^{(v-1)} + \frac{b_{\theta_i} - a_{\theta_i}}{\gamma}, \forall \theta_i \in p$\;
					$\begin{aligned}
						\mathcal{M}_{\theta_i} &= \bigg\{ a_{\theta_i}^{(v)},\, a_{\theta_i}^{(v)} + \frac{1}{\mathcal{N}_{\bm{\theta}} - 1} \left(b_{\theta_i}^{(v)} - a_{\theta_i}^{(v)}\right), \dots,\\
						&\hphantom{=} a_{\theta_i}^{(v)} + \frac{\mathcal{N}_{\bm{\theta}} - 2}{\mathcal{N}_{\bm{\theta}} - 1} \left(b_{\theta_i}^{(v)} - a_{\theta_i}^{(v)}\right),\, b_{\theta_i}^{(v)} \bigg\}, \forall \theta_i \in p
					\end{aligned}$\;
					$\mathcal{M}_{\theta_i} = \{\theta_{i}^{(v-1)}\}, \forall \theta_i \notin p$\;
				}
				\Task{Initialize search grid}
				{
					Define $\mathcal{G}^{(v)}$ as the set of $\mathcal{N}_{\bm{\theta}}^{\lvert p \rvert}$ different vectors $\bar{\bm{\theta}}^{(j)}$, where $\bar{\theta}^{(j)}_i \in \mathcal{M}_{\theta_i}$ ($\forall i \in \{1,\dots,\Theta\}$) holds for their elements\;
				}
				\Task{Evaluate forward solution at grid points}
				{
					\For{$j = 1,\, 2,\, \dots,\, \mathcal{N}_{\bm{\theta}}^{\lvert p \rvert}$}{
						Determine $\bm{K}_t$ and $\bm{L}_t$ with $\bm{\lambda}$ and $\bar{\bm{\theta}}^{(j)} \in \mathcal{G}^{(v)}$ by \eqref{eq:LQGEstimatorK}, \eqref{eq:LQGControlLawL} (LQG) or \eqref{eq:SensoEstimatorK}, \eqref{eq:SensoControlLawL} (LQS)\;
						Determine $\E{\bm{x}_t}$ and $\bm{\Omega}_t^{\bm{x}}$ by Lemma~\ref{lemma:LQGSolution} (LQG) or Lemma~\ref{lemma:SensoSolution} (LQS)\;
						Determine $J_{\ISOC}^{(j),(v)}$ with $\bm{w}_{\bm{\theta},\text{m}}$ and $\bm{w}_{\bm{\theta},\text{v}}$ by \eqref{eq:J_GS} using $\E{\bm{x}_t}$ and $\bm{\Omega}_t^{\bm{x}}$\;
					}
				}
				\Task{Determine $j_{\max}$}
				{
					$j_{\max} = \arg \max \limits_j \left(J_{\ISOC}^{(j),(v)}\right)$\;
					$\bm{\theta}^{(v)} = \bar{\bm{\theta}}^{(j_{\max})}$\;
				}
			}
			\If{$\lvert J_{\ISOC}^{(j_{\max}),(v)} - J_{\ISOC}^{(j_{\max}),(v-1)} \rvert < \delta_{\gamma, \bm{\theta}}$}{
				$\gamma = \bar{\gamma}_{\bm{\theta}} \cdot \gamma$\;
			}			
			$v \leftarrow v + 1$\;
		}
		\Return{$\bm{\theta}^{\ast} = \bm{\theta}^{(v-1)}$}
\end{algorithm} 
Building upon the general procedure of our approach in Fig.~\ref{fig:general_algorithm}, Algorithm~\ref{algo:gridSearchAlgorithm} describes the detailed steps with the two bi-level optimizations in each iteration $l$ realized by Algorithm~\ref{algo:gridSearchAlgorithmSubAlg} which is a specially designed grid search to keep the total number of evaluated grid points computationally tractable. In Algorithm~\ref{algo:gridSearchAlgorithmSubAlg}, $\bm{\theta}$ describes a placeholder for the optimized parameter type and is replaced by $\bm{s}$ or $\bm{\sigma}$ in the respective step of one iteration of Algorithm~\ref{algo:gridSearchAlgorithm}. The optimization is done for the non-zero elements of $\bm{s}^{\ast}$ and $\bm{\sigma}^{\ast}$ (see Assumption~\ref{assumption:LQGBasisVectorsNonZeroElements}). Algorithm~\ref{algo:gridSearchAlgorithmSubAlg} iterates over single grid searches performed on subsets of all parameters $\bm{s}$ or $\bm{\sigma}$. We divide the parameters~$\bm{s}$ and $\bm{\sigma}$ in sets, where $\mathcal{P}_{\bm{s}}$ denotes the set of sets with elements of $\bm{s}$ and $\mathcal{P}_{\bm{\sigma}}$ the corresponding set for $\bm{\sigma}$. Each element of $\bm{s}$ and $\bm{\sigma}$ is supposed to be in at least one set of $\mathcal{P}_{\bm{s}}$ or $\mathcal{P}_{\bm{\sigma}}$. For each element $p \in \mathcal{P}_{\bm{s}}$ (or $\mathcal{P}_{\bm{\sigma}}$) a standard grid search is performed, where the values of the parameters not contained in the current $p$ are kept constant. The initial grid size is determined by the initial lower and upper limits $a_{s_i} \in \mathbb{R}_{\geq 0}$ (and $a_{\sigma_i} \in \mathbb{R}_{\geq 0}$) and $b_{s_i}\geq a_{s_i}$ (and $b_{\sigma_i} \geq a_{\sigma_i}$) between which the $\mathcal{N}_{\bm{s}}\in \mathbb{N}$ (and $\mathcal{N}_{\bm{\sigma}}\in \mathbb{N}$) grid points are spanned. The subsequent intervals for the grid points are determined with the current optimal solution as center. The interval length and thus the grid size is shrunk (tuning parameter $\bar{\gamma}_{\bm{s}}, \bar{\gamma}_{\bm{\sigma}} \in \mathbb{R}_{>0}$) when the iteration over all single grid searches yields no further improvement, i.e. the improvement of $J_{\ISOC}$ is smaller than a threshold $\delta_{\gamma,\bm{s}} \in \mathbb{R}_{>0}$ (or $\delta_{\gamma,\bm{\sigma}} \in \mathbb{R}_{>0}$). Furthermore, the grid size is shrunk after each outer iteration $l$ of Algorithm~\ref{algo:gridSearchAlgorithm} (tuning parameter $\bar{\gamma}_l \in \mathbb{R}_{>0}$) to ensure convergence. In addition to reducing the number evaluated grid points, the proposed parameter sets can further improve the overall performance if prior knowledge is available. For example, if a set of parameters in $\bm{s}$ (or $\bm{\sigma}$) has a strong mutually correlating (positive or negative) influence on $J_{\ISOC}$, they are optimized together in one grid search. The evaluation of the grid points (lower level optimization) of one single grid search is done in parallel on the number of cores available. Algorithm~\ref{algo:gridSearchAlgorithmSubAlg} terminates if the improvement of $J_{\ISOC}$ in three following iterations is smaller than a threshold $\delta_{\bm{s}} \in \mathbb{R}_{>0}$ (or $\delta_{\bm{\sigma}} \in \mathbb{R}_{>0}$) or if a maximum number \mbox{$v_{\bm{s},\max} \in \mathbb{N}$} (or $v_{\bm{\sigma},\max} \in \mathbb{N}$) of iterations is reached. Finally, Algorithm~\ref{algo:gridSearchAlgorithm} ends after $l_{\max}$ iterations and $\tilde{\bm{\sigma}}^{(0)} = \bm{0}$ leads to a fitted deterministic model in the first bi-level optimization of outer iteration $l=1$.

\comment{durch in Simulationsbeispiel oder im Tracking-Fall durchgeführte Systemerweiterung ändert sich hier nichts, da diese Fälle als partially measurable case aufgefasst werden (Abweichung in den erweiterten Zustandsgrößen sollte sowieso null sein, daher werden die erweiterten Zustände nicht als measurable klassifiziert), vgl. Section~\ref{sec:isoc_problems}}
\comment{im späteren Simulationsbeispiel wird nur noch auf die Varianzen und nicht die Kovarianzen gefittet, sprich formal ergibt sich $\bm{w}_{\cdot, \text{v}}^{\mathrm{T}} \diag\left( \bm{\Omega}^{\text{VAF}}\right)$ als zweiter Summand im Zähler von \eqref{eq:J_GS}}
\comment{anschließend an Kommentare in Section~\ref{sec:isoc_problems}: bisher wird keine vollständige Überprüfung der Matrizen auf die Erfüllung ihrer Bedingungen an jedem grid point durchgeführt, lediglich erfolgt implizit eine Gewährleistung von Werten ausschließlich $\geq 0$ in $\tilde{\bm{\sigma}}$ und $\tilde{\bm{s}}$, vgl. $\max$-Bildungen (was auch logische Randbedingung ist) - hier muss deshalb jedoch auch darauf geachten werden, dass initiales Intervall für jeden Parameter groß genug gewählt wird, sonst konvergiert Algorithmus schnell in Nebenminimum nahe null, da Nullsetzen der unteren Grenze automatisch zu einer Verkleinerung der grid size führt - daher wurde $\max$-Bildung in v2 rausgenommen - mit Werten $\geq 0$ in $\tilde{\bm{s}}$ sind Kostenfunktionsmatrizen aber per definitionem symmetrisch und positiv semi-definit, formal müsste noch positive Definitheit von $\bm{R}$ überprüft werden - Kovarianzmatrizen sind bereits per definitionem symmetrisch und positiv semi-definit, positive Definitheit von $\bm{\Omega}^{\bm{\omega}}$ müsste formal ebenfalls noch überprüft werden - perspektivisch kann hier noch eine genauere Überprüfung der Werte an einem Grid Point durch geführt werden als bisher in v2 mit Garantie $\geq 0$}


\section{SIMULATION RESULTS} \label{sec:sim_results}
We provide simulation results for the LQG and LQS case in Subsection~\ref{subsec:results_lqg} and \ref{subsec:results_sensorimotor}, respectively. The applied example system and ground truth parameters are given in Subsection~\ref{subsec:sim_example} and the performance of the ISOC algorithm is evaluated by the metrics defined in Subsection~\ref{subsec:metrics}.

\subsection{Simulation Example} \label{subsec:sim_example}
Our simulation example is given by a planar 2D point-to-point human hand movement motivated by \cite{Todorov.2002}. The system state is defined as $\bm{x}^{\mathrm{T}}=\mat{p_x & p_y & \dot{p}_x & \dot{p}_y & f_x & f_y & g_x & g_y}$, where $p_x$, $p_y$ and $\dot{p}_x$, $\dot{p}_y$ describe position and velocity of the hand, which is modeled as point mass ($m=\SI{1}{kg}$). Moreover, $f_x$ and $f_y$ denote the resultant forces on the hand in each dimension which are the outcomes of second-order linear filters with the neural activation $u_x$ and $u_y$ as inputs. The time constants of the filters are chosen as $\tau_1 = \tau_2 = \SI{40}{ms}$. Discretizing the dynamic and muscle filter equations with $\triangle t = \SI{10}{ms}$ leads to $p_{x,t+1} = p_{x,t} + \triangle t \dot{p}_{x,t}$, $\dot{p}_{x,t+1} = \dot{p}_{x,t} + \frac{\triangle t}{m} f_{x,t}$, $f_{x,t+1} = \left(1-\frac{\triangle t}{\tau_2}\right) f_{x,t} + \frac{\triangle t}{\tau_2} g_{x,t}$ and $g_{x,t+1} = \left(1-\frac{\triangle t}{\tau_1}\right) g_{x,t} + \frac{\triangle t}{\tau_1} u_{x,t}$ from which the system matrices $\bm{A}$, $\bm{B}$ follow (the discretized equations for the second dimension follow analogously). The output matrix is $\bm{H} = \mat{\bm{I}_{6 \times 6} & \bm{0}_{6 \times 2}}$ and for the initial value $\bm{x}_0 = \bm{0}$ holds. The cost function is given by
\begin{equation}
	J = \E{\left(\bm{x}_N - \bm{x}_{\text{ref}}\right)^{\mathrm{T}}\bm{Q}_N\left(\bm{x}_N - \bm{x}_{\text{ref}}\right) + \sum_{t=0}^{N-1}\bm{u}_t^{\mathrm{T}}\bm{R}\bm{u}_t} \label{eq:J_example},
\end{equation}
where $N=41$, $\bm{R} = \sum_{i=1}^{2} s_{R,i} \bm{\psi}_i\bm{\psi}_i^{\mathrm{T}}$ ($s_{R,1}=s_{R,2}=\frac{1}{42} 10^{-5}$), $\bm{Q}_N = \sum_{i=1}^{6} s_{N,i}\bm{\phi}_i\bm{\phi}_i^{\mathrm{T}}$ ($s_{N,1} = s_{N,2} = 1$, $s_{N,3} = s_{N,4} = 0.04$, $s_{N,5} = s_{N,6} =0.0004$) and $\bm{x}_{\text{ref}}=\SI{0.1}{m} \bm{\phi}_1 + \SI{0.1}{m} \bm{\phi}_2$. Here, $\bm{\psi}_i$ and $\bm{\phi}_i$ denote the standard unit vectors of $\mathbb{R}^{2}$ and $\mathbb{R}^{8}$, respectively. For the LQG case, the scaling matrices for the additive noise processes are chosen as $\bm{\Sigma}^{\bm{\xi}} = 1.5\diag\left(\bm{\phi}_7 + \bm{\phi}_8\right)$ and $\bm{\Sigma}^{\bm{\omega}} = \diag\left(\mat{0.02 & 0.02 & 0.2 & 0.2 & 1 & 1}^{\mathrm{T}}\right)$. For the LQS case, the same scaling matrix $\bm{\Sigma}^{\bm{\omega}}$ is selected but $\bm{\Sigma}^{\bm{\xi}}=\bm{0}$ holds. Furthermore, we define the scaling matrices for the signal-dependent noise processes as $\bm{C}_1 = \sigma^{\bm{u}} \bm{B}$, $\bm{C}_2 = \sigma^{\bm{u}} \bm{B} \mat{-\bm{\psi}_2 & \bm{\psi}_1}$ ($\sigma^{\bm{u}} = 0.5$) and $\bm{D} = 0.1 \bm{H}$. Eq.~\eqref{eq:J_example} can be transformed into a form as \eqref{eq:LQGCost} by augmenting the system state with $p_{x,\text{ref}}$ and $p_{y,\text{ref}}$ as additional states with constant dynamics\cite{Todorov.2002}. We assume that only $p_x$, $p_y$, $\dot{p}_x$ and $\dot{p}_y$ are measured in the ground truth data ($\bm{M}=\mat{\bm{I}_{4 \times 4} & \bm{0}_{4 \times 4}}$), which could be realized in practice by motion capturing. 

Finally, we define the vectors $\bm{s}$ and $\bm{\sigma}$ as $\bm{s}^{\mathrm{T}} = \mat{s_{N,1} & \dots & s_{N,6} & s_{R,1} & s_{R,2}}$ and $\bm{\sigma}^{\mathrm{T}} = \mat{\sigma_{1}^{\bm{\xi}} & \dots & \sigma_{8}^{\bm{\xi}} & \sigma_{1}^{\bm{\omega}} & \dots & \sigma_{6}^{\bm{\omega}}}$ (LQG case) or $\bm{\sigma}^{\mathrm{T}} = \mat{\sigma_{1}^{\bm{\xi}} & \dots & \sigma_{8}^{\bm{\xi}} & \sigma_{1}^{\bm{\omega}} & \dots & \sigma_{6}^{\bm{\omega}} & \sigma^{\bm{u}} & \sigma^{\bm{x}}}$ (LQS case), where $\sigma_i^{\bm{\xi}}$ and $\sigma_i^{\bm{\omega}}$ denote the diagonal elements of $\bm{\Sigma}^{\bm{\xi}}$ and $\bm{\Sigma}^{\bm{\omega}}$, respectively. Here, compared to Section~\ref{sec:isoc_problems}, we consider that $\bm{\Sigma}^{\bm{\xi}}$ and $\bm{\Sigma}^{\bm{\omega}}$ are diagonal. We use $\bm{s}^{\ast}$ and $\bm{\sigma}^{\ast}$ defined from the numerical values before to simulate the ground truth data for our ISOC algorithm, i.e. $\hat{\bm{m}}_t = \E{\bm{M}\bm{x}^{\ast}_t}$ and $\hat{\bm{\Omega}}^{\bm{x}^{\ast}}_t = \bm{M}\bm{\Omega}^{\bm{x}^{\ast}}_t\bm{M}^{\text{T}}$ hold where $\E{\bm{x}^{\ast}_t}$ and $\bm{\Omega}^{\bm{x}^{\ast}}_t$ follow from Lemma~\ref{lemma:LQGSolution} or Lemma~\ref{lemma:SensoSolution} using $\bm{s}^{\ast}$ and $\bm{\sigma}^{\ast}$.
\comment{Hintergedanke: $\bm{\Sigma}^{\bm{\xi}}$ beschreibt additives Rauschen in Stellgröße im LQG-Fall als Alternative zum multiplikativen Rauschen im sensomotorischen Modell (deshalb Skalierungsmatrix $\bm{\Sigma}^{\bm{\xi}}$ in letzterem zu null gewählt, Rauschen in Stellgröße wird hier eben durch multiplikativen Teil dargestellt); Varianz und damit Unsicherheit der Perzeption nimmt zwischen Position und Geschwindigkeit um eine Größenordnung zu und dann nochmal zur Kraft, da Position mit größter Sicherheit vom Menschen erfasst wird, Geschwindigkeit deutlich schlechter und Kraft kann nur propriozeptisch wahrgenommen werden, daher nochmals größere Unsicherheit (bei sensomotorischem Modell kommt dann multiplikatives Rauschen zusätzlich dazu) - Gütemaß beschreibt Abweichung von Sollposition (kartesische Größen in Zielkoordinaten, Kraft und Geschwindigkeit null), bei gleichzeitige Bestrafung der neuronalen Stellgröße (effort) über die Bewegung, da aber schnelle Bewegungen betrachtet werden und Zielposition relativ gut erreicht werden soll, hier geringes Gewicht}

\subsection{Evaluation Metrics} \label{subsec:metrics}
The VAF metrics $\bm{m}^{\text{VAF}}$~\eqref{eq:VAFEW} and $\bm{\Omega}^{\text{VAF}}$~\eqref{eq:VAFCOV}, where $\tilde{\bm{x}}_t$ results from $\tilde{\bm{s}}$ and $\tilde{\bm{\sigma}}$ via $\tilde{\bm{K}}_t$ and $\tilde{\bm{L}}_t$, are used to asses if Problem~\ref{problem:ISOCLQG} or \ref{problem:ISOCSenso} is solved. Furthermore, we introduce the parameter errors
\begin{align}
	\triangle_i^{\bm{s}} &= \left| 1 - \frac{\tilde{s}_i}{s_i^{\ast}} \frac{s_{N,1}^{\ast}}{\tilde{s}_{N,1}} \right|, \forall i \in \{1,\dots,S\} \label{eq:paramErrorS}, \\
	\triangle_i^{\bm{\sigma}} &= \left| 1 - \frac{\tilde{\sigma}_i}{\sigma_i^{\ast}} \right|, \forall i \in \{1,\dots,\Sigma\} \label{eq:paramErrorSigma}
\end{align}
to asses the difference between the estimated and ground truth values when the ground truth ones are non-zero. The normalization in \eqref{eq:paramErrorS} is due to the scaling ambiguity of the cost function parameters, well-known in deterministic IOC \cite{Molloy.2020}. This kind of ambiguity is absent for noise parameters.

\subsection{Results of the Linear-Quadratic Gaussian Case} \label{subsec:results_lqg}
The tuning parameters of Algorithm~\ref{algo:gridSearchAlgorithm} for its application to the LQG example system are chosen as: $\mathcal{N}_{\bm{\sigma}} = \mathcal{N}_{\bm{s}} = 8$, $\bar{\gamma}_l = \bar{\gamma}_{\bm{\sigma}} = \bar{\gamma}_{\bm{s}} = 2$, $v_{\bm{\sigma},\max} = v_{\bm{s},\max} = 20$, $l_{\max} = 3$, $\delta_{\gamma,\bm{\sigma}} = \delta_{\gamma,\bm{s}} = 0.01$, $\delta_{\bm{\sigma}} = \delta_{\bm{s}} = 0.001$, $a_{\sigma_i} = 0$, $b_{\sigma_i} = 4$ ($\forall i \in \{1,\dots,\Sigma\}$), $a_{s_i} = 0$ ($\forall i \in \{1,\dots,S\}$), $b_{s_i} = 4$ ($i \in \{1,2\}$), $b_{s_i} = 0.4$ ($i \in \{3,4\}$), $b_{s_i} = 0.004$ ($i \in \{5,6\}$), $b_{s_i} = 4\cdot 10^{-6}$ ($i \in \{7,8\}$), $\bm{w}_{\bm{\sigma},\text{m}}^{\mathrm{T}} = \bm{w}_{\bm{s},\text{v}}^{\mathrm{T}} = \mat{0.1 & 0.1 & 0.1 & 0.1}$, $\bm{w}_{\bm{\sigma},\text{v}}^{\mathrm{T}} = \bm{w}_{\bm{s},\text{m}}^{\mathrm{T}} = \mat{0.9 & 0.9 & 0.9 & 0.9}$. Here, the second summand in the numerator of \eqref{eq:J_GS} is $\bm{w}_{\cdot, \text{v}}^{\mathrm{T}} \diag\left( \bm{\Omega}^{\text{VAF}}\right)$. Finally, we select the parameter sets as $\mathcal{P}_{\bm{\sigma}} = \{ \{\sigma_1, \sigma_3, \sigma_5, \sigma_7\}, \{\sigma_2, \sigma_4, \sigma_6, \sigma_8\}, \{\sigma_9, \sigma_{11}, \sigma_{13}\}, \{\sigma_{10}, \sigma_{12}, \\ \sigma_{14}\} \}$ and $\mathcal{P}_{\bm{s}} = \{ \{s_1, s_3, s_5, s_7\}, \{s_2, s_4, s_6, s_8\} \}$. The algorithm and simulations are implemented in Matlab.
Fig.~\ref{fig:LQG_y} shows that our algorithm solves Problem~\ref{problem:ISOCLQG} since the VAF values for mean and variance of the measurable states are $\geq 99.8\,\%$. The computation time on a Ryzen 7 5800X with 8 parallel cores was $\SI{247}{s}$.
\comment{Rechenzeit würde bei geschätzten Momenten aus Einzelsimulationen (5000 nötig für Übereinstimmung mit Momenten der Lemmata) um ca. $\SI{273.6}{h}$ ansteigen; Rechenzeit $T_C$ kann folgendermaßen abgeschätzt werden: $T_C = N_G (T_{FS}+T_{EV})$ mit der Anzahl $N_G$ der insgesamt über alle Iterationen evaluierten grid points, der Rechenzeit für die Berechnung der Vorwärtslösung $T_{FS}$ sowie der Rechenzeit für die Bestimmung der Momente, welche im upper level ausgewertet werden, $T_{EV}$; mit $T_{EV,sim} = c T_{EV,lemmata}$ folgt für die Rechenzeitzunahme $N_G (c-1) T_{EV,lemmata}$; $T_{EV,lemmata}$ und $T_{EV,sim}$ sind dabei unabhängig von den konkreten Werten an den grid points (für LQG und LQS), lediglich $T_{FS}$ variiert im LQS-Fall relativ stark an verschiedenen grid points und ist prinzipiell höher als im LQG-Fall (auch $N_G$ ist im LQS-Fall höher, da für gute Ergebnisse eine größere Genauigkeit in den grid points und mehr Iterationen benötigt werden); damit kann aber $T_{EV,lemmata}$ und $T_{EV,sim}$ aus einer Analyse an den optimalen Werten bestimmt werden und mit $N_G$ ergibt sich Rechenzeitzunahme}
\begin{figure}[t]
	\centering
	\includegraphics[width=3in]{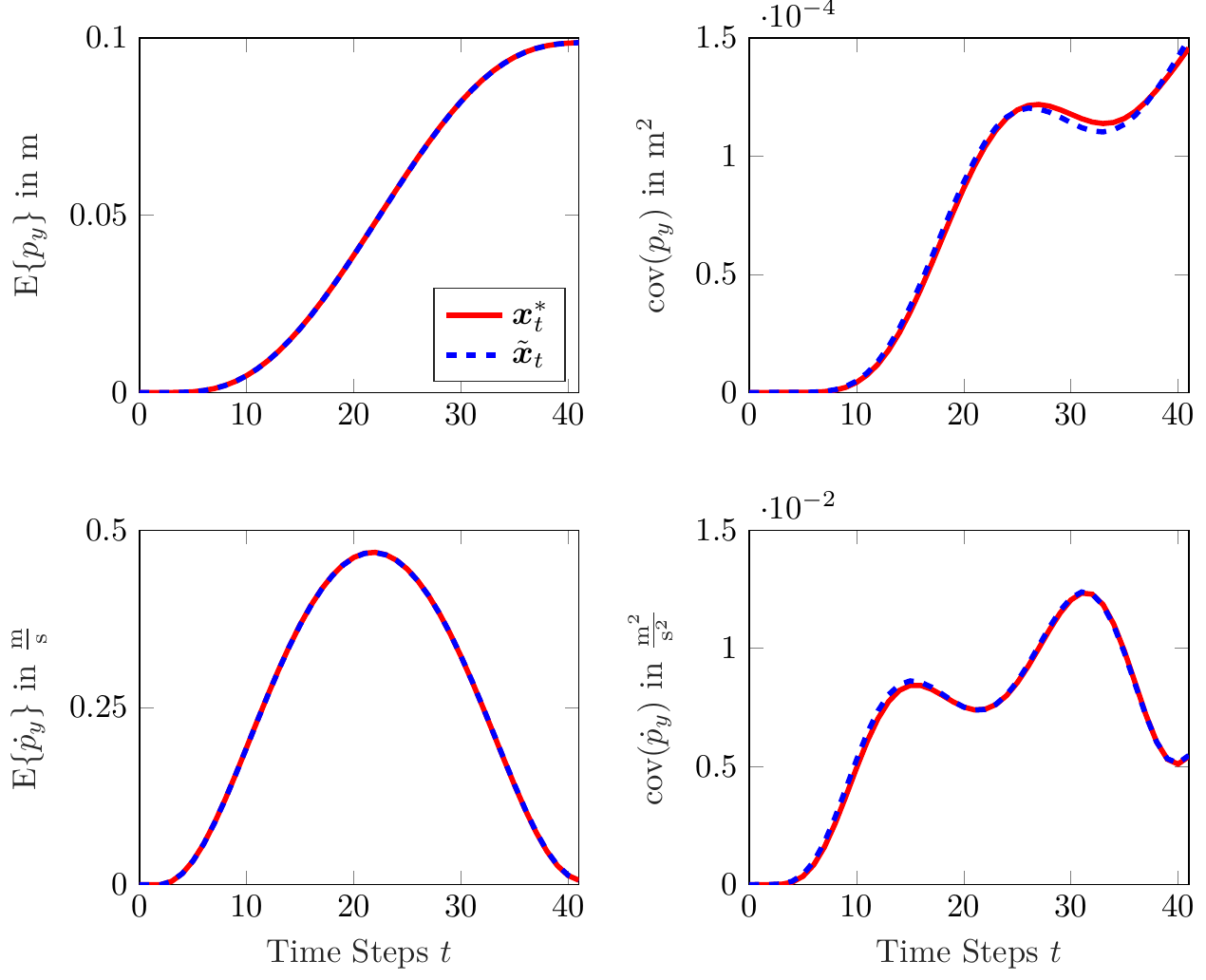}
	\caption{Mean and variance of $p_y$ and $\dot{p}_y$ (LQG). The values of $\bm{x}_t^{\ast}$ (resulting from $\bm{s}^{\ast}$ and $\bm{\sigma}^{\ast}$) and $\tilde{\bm{x}}_t$ (resulting from $\tilde{\bm{s}}$ and $\tilde{\bm{\sigma}}$) are compared. The VAF metrics are $1$ and $0.999$ in the first and second row. The corresponding values for $p_x$ are $1$ and $0.999$ and for $\dot{p}_x$ $1$ and $0.998$.}
	\label{fig:LQG_y}
\end{figure}
\begin{figure}[t]
	\centering
	\includegraphics[width=3in]{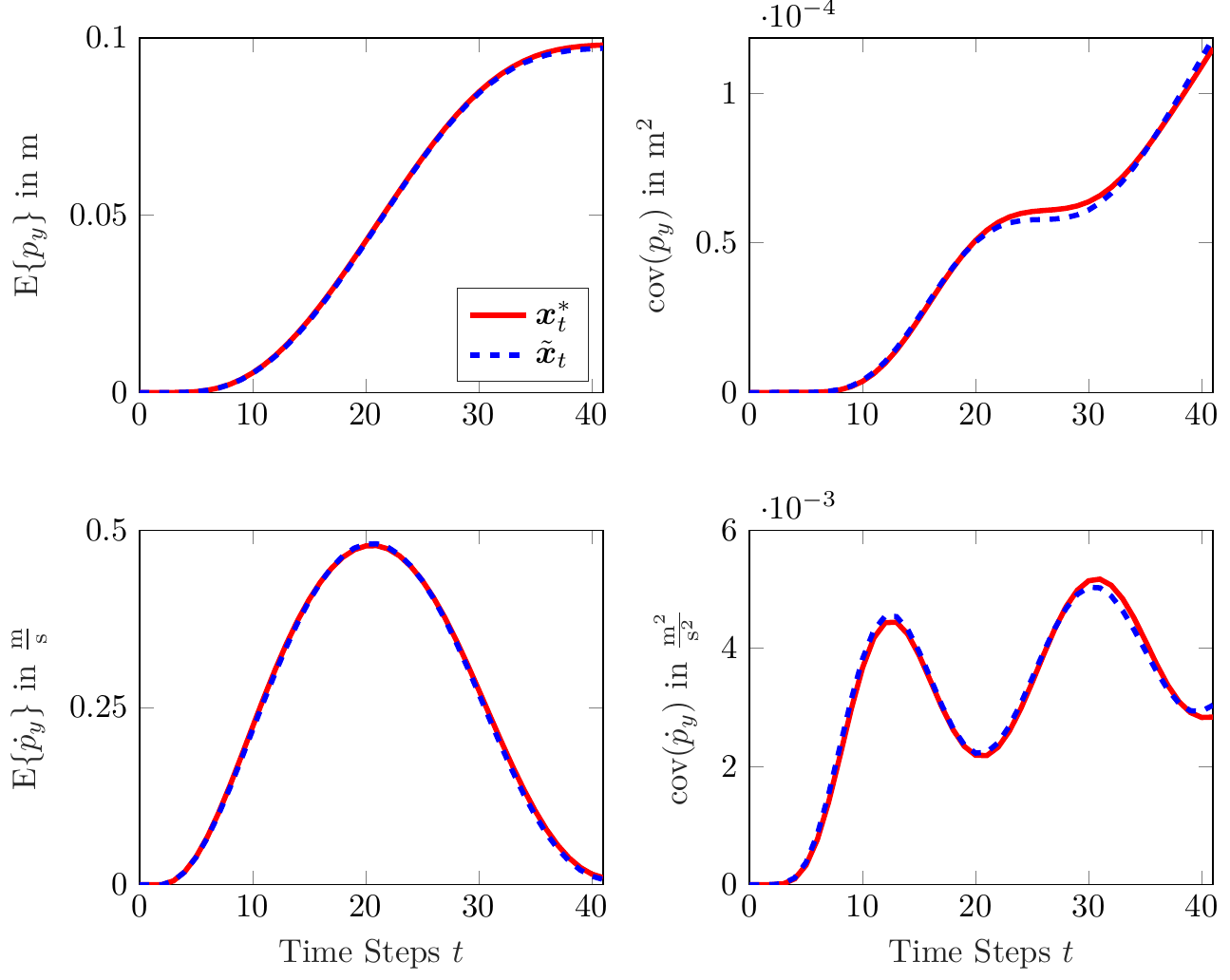}
	\caption{Mean and variance of $p_y$ and $\dot{p}_y$ (LQS). The VAF metrics are $1$ and $0.998$ in the first and $1$ and $0.996$ in the second row. The corresponding values for $p_x$ are $1$ and $0.999$ and for $\dot{p}_x$ $0.999$ and $0.965$.}
	\label{fig:Senso_y}
\end{figure}

In Table~\ref{tab:parameter_results} the parameters $\tilde{\bm{\sigma}}$ and $\tilde{\bm{s}}$ are compared with their ground truth values $\bm{\sigma}^{\ast}$ and $\bm{s}^{\ast}$. The high accuracy of estimating the parameters of the additive noise process $\bm{\xi}_t$ is remarkable which is due to their strong influence on the variance of the measurable states. In addition, the cost function parameters $s_{N,1}$, $s_{N,3}$ and $s_{R,1}$ are identified correctly up to the known scaling ambiguity. Regarding the parameters $s_{N,2}$, $s_{N,4}$ and $s_{R,2}$, an additional ambiguity can be seen. In the LQG case, the 2D system is composed by two 1D systems. Hence, to describe the moments of the measurable states it is sufficient that the ratios between $s_{N,1}$, $s_{N,3}$, $s_{R,1}$ and $s_{N,2}$, $s_{N,4}$, $s_{R,2}$ are estimated correctly but not with the same reference value necessarily. Thus, changing the normalizing parameter in \eqref{eq:paramErrorS} to $s_{N,2}$ leads to $\triangle_2^{\bm{s}}=0$, $\triangle_4^{\bm{s}}=0.05$ and $\triangle_8^{\bm{s}}=0.01$. However, the parameters of the additive observation noise are estimated with high errors. This can be traced back to the filter equations. Since they compensate noisy observations, the underlying noise parameters cannot be determined from realizations of $\bm{x}_t^{\ast}$. 
\comment{bzgl. noise parameters des Ausgangsrauschens: hier kann man dann auch weiter schlussfolgern, dass es damit im Allgemeinen schwer/unmöglich ist, die Parameter des Rauschens der human perception zu identifizieren, wenn nur von Messungen der resultierenden Zustände ausgegangen wird - logischerweise wird das Rauschen in der human perception gerade durch die filter equations möglichst kompensiert (sprich Mensch kompensiert dieses durch Modell der Umwelt) - daher entsteht bzgl. dieser Parameter große Ambiguität (größeren Einfluss könnte dann gerade wieder das interne Rauschen haben) - konkret entsteht im ISOC-Fall eine zweistufige Ambiguität: mehrere Parameter können zu identischen control/filter matrices führen (analog zu deterministischem IOC), unterschiedliche control/filter matrices können mit ihren jeweiligen Parametern aber auch zu den gleichen Momenten führen (neu bei ISOC) - das ist gerade hier im LQG-Fall bzgl. der filter matrices der Fall (verschiedene filter matrices zu verschiedenen observation noise parameters = identische Momente) - formal kann das dann gerade an den obigen Lemmata analysiert werden - kann dann weiter geschlussfolgert werden, dass im fully observable case ebenfalls Parameter existieren, die gleiche Momente erzeugen (auch an Formeln zu analysieren/belegen), kann für ISOC Vereinfachung auf fully observable case gemacht werden (PIIRL-Anwendung für Performance-Verbesserung und theoretische Analyse) - observation noise parameter und damit human perception erstmal nicht ohne weiteres identifizierbar und auch kein relevanter Einfluss auf Resultat des Modells, welches für design supporting automation interessant ist}

\subsection{Results of the Linear-Quadratic Sensorimotor Case} \label{subsec:results_sensorimotor}
The tuning parameters to apply Algorithm~\ref{algo:gridSearchAlgorithm} to the LQS example are the same as in the subsection before. Only $\mathcal{N}_{\bm{\sigma}} = \mathcal{N}_{\bm{s}}=10$ holds and the parameter sets of the noise parameters are adapted: $\mathcal{P}_{\bm{\sigma}} = \{ \{\sigma_1, \sigma_3\}, \{\sigma_2, \sigma_4\}, \{\sigma_5, \sigma_7, \sigma_{15}\}, \{\sigma_6, \sigma_8, \sigma_{15}\}, \{\sigma_9, \sigma_{11},\\\sigma_{13}, \sigma_{16}\}, \{\sigma_{10}, \sigma_{12}, \sigma_{14}, \sigma_{16}\} \}$.
Fig.~\ref{fig:Senso_y} shows that our algorithm finds parameters that solve Problem~\ref{problem:ISOCSenso} as well. All VAF values are $\geq 99.6\,\%$ except the one for the variance of $\dot{p}_x$ which still has a VAF $\geq 96.5\,\%$. The computation time was $\SI{19.9}{h}$. The iterative calculation of $\bm{K}_t$ and $\bm{L}_t$ at each grid point is one reason for the computational complexity. In addition, to achieve an accuracy comparable to the LQG case, the total number of grid points evaluated is greater.
Table~\ref{tab:parameter_results} depicts the parameter errors for the LQS example as well. As in the LQG case, the exact values of the observation noise parameters have low influence on the measurable states and thus, show high error measures\comment{auch hier stimmen wie im LQG-Fall die control und filter matrices der reference nicht mit den der identifizierten Parameter überein (bei LQG stimmen jedoch die control matrices überein - separation theorem), d.h. auch hier tritt die oben erläuterte zweistufige Ambiguität auf, die sich hier jedoch auch auf die control matrices ausweitet - logisch, da hier Kombination aus control/filter matrices aufgrund ihrer gegenseitigen Abhängigkeit das observation noise kompensieren (andere observation noise parameter, andere filter matrices, andere control matrices) - die anderen control matrices können hier im LQS-Fall dann gerade dazu führen, dass die Gütemaßparameter hier größere Fehler aufweisen trotz sehr hoher VAF-Maße, vergleichbar zu LQG model}. Remarkably, the scaling parameter $\sigma^{\bm{u}}$ of the control-dependent noise is estimated with high accuracy (relative error of $3\,\%$). Consequently, the control-dependent noise is the most important part of the LQS model to achieve its characteristic mean and variance curves of the measurable states. Specifically, compared to the LQG model, the peak of $\covs{p_y}$ at $t=26$ in Fig.~\ref{fig:LQG_y} disappears in Fig.~\ref{fig:Senso_y} and the values of $\covs{p_y}$ are lower over the complete time horizon in case of the LQS model. Furthermore, the peaks of $\E{\dot{p}_y}$ and $\covs{\dot{p}_y}$ appear around two steps earlier and the peaks of $\covs{\dot{p}_y}$ have nearly the same height in the LQS case.
\comment{Additive noise in force and neural activation ($\sigma^{\bm{\xi}}_{5-8}$) seems to result in very similar moments of the measurable states since their corresponding values are non-zero and higher for the $p_x$-components ($\sigma^{\bm{\xi}}_{5}$, $\sigma^{\bm{\xi}}_{7}$), i.e. the quantity with slightly worse variance matching of its velocity. Thus, the optimization algorithm has difficulties to resolve this ambiguity and can converge before. Regarding the cost function parameters, $s_{N,1}$ and $s_{N,2}$ are the most influential once, followed by $s_{R,1}$ and $s_{R,2}$ where however only the magnitude needs to be identified correctly.}
\comment{gemäß der im LQG-Fall eingeführten Rechenzeitanalyse würde im LQS-Fall die Rechenzeit bei berechneten Momenten aus Einzelsimulationen um $\SI{931.9}{h}$ steigen}
\tdd{
	\begin{itemize}
		\item nochmalige Ausführung der Optimierung mit gleichen Parametern, Bestimmung $N_G$ und damit Berechnung Rechenzeitzunahme
	\end{itemize}
}

\begin{table}[t]
	\centering
	\caption{Parameter errors according to \eqref{eq:paramErrorS} and \eqref{eq:paramErrorSigma} achieved by Algorithm~\ref{algo:gridSearchAlgorithm}. Values marked with $^{\ast}$ denote raw values calculated by our algorithm since the corresponding ground truth value is zero.}
	\label{tab:parameter_results}
	\begin{tabularx}{0.485\textwidth}{|>{\centering\arraybackslash}X|>{\centering\arraybackslash}X|>{\centering\arraybackslash}X||>{\centering\arraybackslash}X|>{\centering\arraybackslash}X|>{\centering\arraybackslash}X||>{\centering\arraybackslash}X|>{\centering\arraybackslash}X|>{\centering\arraybackslash}X|}
		\hline	
		& LQG & LQS & & LQG & LQS & & LQG & LQS \\
		\hline	
		$\sigma_1^{\bm{\xi}}$ &	$0^{\ast}$ & $0^{\ast}$ & $s_{N,1}$ & $0$ & $0$ & $\sigma_1^{\bm{\omega}}$ & $0.21$ & $684$ \\
		\hline	
		$\sigma_2^{\bm{\xi}}$ & $0^{\ast}$ & $0^{\ast}$ & $s_{N,2}$ & $0.66$ & $0$ & $\sigma_2^{\bm{\omega}}$ & $22.2$ & $510$ \\
		\hline	
		$\sigma_3^{\bm{\xi}}$ & $0^{\ast}$ & $0^{\ast}$ & $s_{N,3}$ & $0.07$ & $3.94$ & $\sigma_3^{\bm{\omega}}$ & $0.57$ &	$24.7$ \\
		\hline	
		$\sigma_4^{\bm{\xi}}$ &	$0^{\ast}$ & $0^{\ast}$ & $s_{N,4}$ & $0.58$ & $0.98$ & $\sigma_4^{\bm{\omega}}$ & $0.03$ &	$0.14$ \\
		\hline	
		$\sigma_5^{\bm{\xi}}$ &	$0.18^{\ast}$ & $0.30^{\ast}$ & $s_{N,5}$ & $0.05$ & $0.64$ & $\sigma_5^{\bm{\omega}}$ & $0.15$ & $0.56$ \\
		\hline	
		$\sigma_6^{\bm{\xi}}$ & $0.19^{\ast}$  & $0.09^{\ast}$ & $s_{N,6}$ & $0.05$ & $0.64$ & $\sigma_6^{\bm{\omega}}$ & $0.13$ & $0.25$ \\
		\hline	
		$\sigma_7^{\bm{\xi}}$ &	$0.03$ & $0.68^{\ast}$ & $s_{R,1}$ & $0.04$ & $0.41$ & $\sigma^{\bm{x}}$ & - &	$1.78$ \\
		\hline	
		$\sigma_8^{\bm{\xi}}$ &	$0.03$ & $0.32^{\ast}$ & $s_{R,2}$ & $0.65$ & $0.41$ & & &	\\
		\hline	
		$\sigma^{\bm{u}}$ & - & $0.03$ & & & & & &	\\
		\hline	
	\end{tabularx}
\end{table}


\section{CONCLUSION} \label{sec:conclusion}
In this paper, we propose formal definitions of the inverse problem of the LQG and LQS control model for the first time. Solving these ISOC problems is highly relevant for the identification of the unknown parameters of these SOC models, namely weighting matrices of the cost function and covariance matrices of the noise processes, to analyze the optimality principles underlying human movements. Furthermore, we introduce a new bi-level-based algorithm, which iteratively estimates cost function and noise parameters, to solve both ISOC problems. Simulation examples show that mean and variance of system states obtained from parameters determined by our ISOC algorithm predominantly yield VAF values $\geq 99\,\%$ compared to ground truth data. Since the simulation results are very promising, we look at the application of our algorithm to real human measurement data in the next step. In addition, we investigate extensions of our approach to nonlinear systems to overcome the limitations of linear models in describing real human movements.
\tdd{
	\begin{itemize}
		\item Zusammenfassung, inkl. verkaufen, sprich der entwickelte Ansatz zur Identifikation der unbekannten Parameter (Kostenfunktions- und Rauschparameter, beide wichtig sogar für mittleres Verhalten im state-of-the-art sensomotorischen Modell nach \cite{Todorov.2002}; letztere stets für Beschreibung Variabilität) stochastischer Modelle menschlicher Bewegungen bildet die Grundlage für die Verifikation dieser Modelle im Rahmen von Studien, damit auch dem Verständnis des Menschen und seiner Bewegungen (und deren grundlegenden Prinzipien), was schließlich auch der zwingende Ausgangspunkt zum Design einer supporting automation ist (dabei können auch direkt die identifizierten stochastischen Optimalregelungsmodelle für das control design genutzt werden)
		\item Ausblick, v.a. dass in further work gerade drei Aspekte zentrale Rolle spielen: Konvergenzbeweis two-step procedure, Ersatz bi-level optimization durch indirect approaches (e.g. IRL methods), Anwendung auf reale Daten menschlicher Bewegungen - hier vorgestellter Ansatz bildet somit die Grundlage für eine Vielzahl weiterer Arbeiten in unterschiedlichste Richtungen (weiteres Verkaufsargument)
	\end{itemize}
}





\section*{APPENDIX: PROOF OF LEMMA~\ref{lemma:SensoSolution}}
\begin{proof}
	With $\bm{u}_t = -\bm{L}_t \hat{\bm{x}}_t$ in \eqref{eq:SensoDynamics} and in the filter equation for $\hat{\bm{x}}_{t+1}$, \eqref{eq:SensoSolutionEW} results by taking advantage of the independence between $\bm{\varepsilon}_t$ and $\hat{\bm{x}}_t$ as well as between $\bm{\epsilon}_t$ and $\bm{x}_t$. Using the same expressions for $\bm{x}_{t+1}$ and $\hat{\bm{x}}_{t+1}$ to set up the covariance, we derive
	\begin{align}
		&\mat{\bm{\Omega}_{t+1}^{\bm{x}} \!\!& \bm{\Omega}_{t+1}^{\bm{x}\hat{\bm{x}}} \\ \bm{\Omega}_{t+1}^{\hat{\bm{x}}\bm{x}} \!\!& \bm{\Omega}_{t+1}^{\hat{\bm{x}}}} = \covs{\mat{\bm{x}_{t+1} \\ \hat{\bm{x}}_{t+1}}} \nonumber \\
		&= \bm{\mathcal{A}}_t \mat{\bm{\Omega}_{t}^{\bm{x}} \!\!\!\!& \bm{\Omega}_{t}^{\bm{x}\hat{\bm{x}}} \\ \bm{\Omega}_{t}^{\hat{\bm{x}}\bm{x}} \!\!\!\!& \bm{\Omega}_{t}^{\hat{\bm{x}}}} \bm{\mathcal{A}}_t^{\mathrm{T}} \nonumber \\
		&\hphantom{=} + \E{\mat{-\sum_{i} \varepsilon_t^{(i)} \bm{C}_i \bm{L}_t \hat{\bm{x}}_t \\ \bm{K}_t \sum_{i} \epsilon_t^{(i)} \bm{D}_i \bm{x}_t} \mat{-\sum_{i} \varepsilon_t^{(i)} \bm{C}_i \bm{L}_t \hat{\bm{x}}_t \\ \bm{K}_t \sum_{i} \epsilon_t^{(i)} \bm{D}_i \bm{x}_t}^{\mathrm{T}}} \nonumber \\
		&\hphantom{=} + \E{\mat{\bm{\xi}_{t} \\ \bm{\eta}_{t} + \bm{K}_t \bm{\omega}_t} \mat{\bm{\xi}_{t} \\ \bm{\eta}_{t} + \bm{K}_t \bm{\omega}_t}^{\mathrm{T}}} \label{eq:proof2_1}
	\end{align}
	by exploiting the following independences: $\bm{\varepsilon}_t$ to $\hat{\bm{x}}_t$; $\bm{\epsilon}_t$ to $\bm{x}_t$; $\bm{\xi}_t$ to $\bm{x}_t$, $\hat{\bm{x}}_t$, $\bm{\varepsilon}_t$ and $\bm{\epsilon}_t$; $\bm{\eta}_t$ to $\bm{x}_t$, $\hat{\bm{x}}_t$, $\bm{\varepsilon}_t$ and $\bm{\epsilon}_t$; $\bm{\omega}_t$ to $\bm{x}_t$, $\hat{\bm{x}}_t$, $\bm{\varepsilon}_t$ and $\bm{\epsilon}_t$.
	From the independence of $\bm{\xi}_t$ to $\bm{\eta}_{t}$ and $\bm{\omega}_t$
	\begin{equation}
		\mat{\bm{\Omega}^{\bm{\xi}} \!\!& \bm{0} \\ \bm{0} \!\!& \bm{\Omega}^{\bm{\eta}} + \bm{K}_t \bm{\Omega}^{\bm{\omega}} \bm{K}_t^{\mathrm{T}}} \label{eq:proof2_2}
	\end{equation} 
	follows for the third summand in \eqref{eq:proof2_1}. Each element of the matrix of the second summand in \eqref{eq:proof2_1} can be simplified by considering the independence of $\bm{\varepsilon}_t$ and $\bm{\epsilon}_t$ to $\bm{x}_t$ and $\hat{\bm{x}}_t$ as well as $\E{\varepsilon_t^{(i)} \varepsilon_t^{(j)}} = \E{\epsilon_t^{(i)} \epsilon_t^{(j)}} = \delta_{ij}$ ($\delta_{ij} = 1$ for $i=j$, $\delta_{ij} = 0$ for $i\neq j$) and $\E{\varepsilon_t^{(i)} \epsilon_t^{(j)}} = 0$ ($\forall i,j$):
	\begin{align}
		&\E{\sum_{i} \varepsilon_t^{(i)} \bm{C}_i \bm{L}_t \hat{\bm{x}}_t \hat{\bm{x}}_t^{\mathrm{T}} \bm{L}_t^{\mathrm{T}} \sum_{j} \bm{C}_j^{\mathrm{T}} \varepsilon_t^{(j)}} \nonumber \\
		&= \sum_{i} \bm{C}_i \bm{L}_t \E{\hat{\bm{x}}_t \hat{\bm{x}}_t^{\mathrm{T}}} \bm{L}_t^{\mathrm{T}} \bm{C}_i^{\mathrm{T}} \label{eq:proof2_3} \\
		&\E{-\sum_{i} \varepsilon_t^{(i)} \bm{C}_i \bm{L}_t \hat{\bm{x}}_t \bm{x}_t^{\mathrm{T}} \sum_{j} \bm{D}_j^{\mathrm{T}} \epsilon_t^{(j)} \bm{K}_t^{\mathrm{T}}} = \bm{0} \\
		&\E{-\bm{K}_t \sum_{i} \epsilon_t^{(i)} \bm{D}_i \bm{x}_t \hat{\bm{x}}_t^{\mathrm{T}} \bm{L}_t \sum_{j} \bm{C}_j^{\mathrm{T}} \varepsilon_t^{(j)}} = \bm{0} \\
		&\E{\bm{K}_t \sum_{i} \epsilon_t^{(i)} \bm{D}_i \bm{x}_t \bm{x}_t^{\mathrm{T}} \sum_{j} \bm{D}_j^{\mathrm{T}} \epsilon_t^{(j)} \bm{K}_t^{\mathrm{T}}} \nonumber \\ 
		&= \sum_i \bm{K}_t \bm{D}_i \E{ \bm{x}_t \bm{x}_t^{\mathrm{T}} } \bm{D}_i^{\mathrm{T}} \bm{K}_t^{\mathrm{T}} \label{eq:proof2_4}.
	\end{align}
	With $\E{\hat{\bm{x}}_t \hat{\bm{x}}_t^{\mathrm{T}}} = \bm{\Omega}_{t}^{\hat{\bm{x}}} + \E{\hat{\bm{x}}_t}\E{\hat{\bm{x}}_t}^{\mathrm{T}}$ in \eqref{eq:proof2_3} and $\E{ \bm{x}_t \bm{x}_t^{\mathrm{T}} } = \bm{\Omega}_{t}^{\bm{x}} + \E{\bm{x}_t}\E{\bm{x}_t}^{\mathrm{T}}$ in \eqref{eq:proof2_4}, \eqref{eq:proof2_1} reduces to \eqref{eq:SensoSolutionCOV}. 
	The initial values follow analogously to the proof of Lemma~\ref{lemma:LQGSolution}.
\end{proof}
%
%


\bibliographystyle{IEEEtran}

\bibliography{IEEEabrv,References}

\end{document}